\documentclass[12pt]{amsart}
 
\usepackage{etex}
\usepackage[utf8]{inputenc}
\usepackage[T1]{fontenc}
\usepackage{geometry}
\geometry{a4paper}
\usepackage{lmodern}
\usepackage{amsmath}
\usepackage{amsthm}
\usepackage{amssymb}
\usepackage{mathrsfs}
\usepackage{graphicx}
\usepackage{pst-all}
\usepackage{tikz}
\usepackage{marvosym}
\usetikzlibrary{shapes}

\title{Three-way symbolic tree-maps and  ultrametrics}

\author{K.T.Huber, V.Moulton, G.E.Scholz}
\address{Huber, Moulton, Scholz: School of Computing Sciences, University of East Anglia, UK. 
	e-mail: k.huber@uea.ac.uk, v.moulton@uea.ac.uk, g.scholz@uea.ac.uk}

\date
	{\today}

\newtheorem{theorem}{Theorem}[section]
\newtheorem{lemma}[theorem]{Lemma} 
\newtheorem{corollary}[theorem]{Corollary}
\newtheorem{proposition}[theorem]{Proposition}

\begin{document}

\maketitle

\begin{abstract}
	    Three-way dissimilarities are a generalization of (two-way) dissimilarities which 
		can be used to indicate the lack of homogeneity or resemblance between any three objects.
		Such maps have applications in cluster analysis, and have been used in areas such as
		psychology and phylogenetics, where three-way data tables can arise. 
		Special examples of such dissimilarities are three-way tree-metrics and ultrametrics, 
		which arise from leaf-labelled trees with edges labelled by positive real numbers. Here we consider
		three-way maps which arise from leaf-labelled trees where instead
		the interior {\em vertices} are labelled by an {\em arbitrary} set of values.
		For unrooted trees we call such maps {\em three-way  symbolic tree-maps}; for rooted trees
		we call them {\em three-way symbolic ultrametrics} since they can be 
		considered as a generalization of the (two-way) 
		symbolic ultrametrics of B\"{o}cker and Dress. 
		We show that, as with two- and three-way 
		tree-metrics and ultrametrics, three-way symbolic tree-maps 
		and ultrametrics can be characterized via certain $k$-point conditions. 
		In the unrooted case, our characterization is mathematically
		equivalent to one presented by Gurvich
		for a certain class of edge-labelled hypergraphs. We also 
		show that it can be decided whether or not an arbitrary three-way symbolic map 
		is a tree-map or a symbolic ultrametric using a triplet-based approach that relies on the so-called
		BUILD algorithm for deciding when a set of 3-leaved trees or {\em triplets} can be displayed by a single tree. We envisage that our results will be useful in 
		developing new approaches and algorithms for understanding 3-way data, especially within
		the area of phylogenetics.
\end{abstract}

\noindent{{\bf Keywords:} Three-way dissimilarity, Three-way symbolic map, Symbolic ultrametric, Ultrametric, Tree-metric, Phylogenetic tree}

\section{Introduction}

Three-way dissimilarities are a generalization of (two-way) dissimilarities which 
can be used to indicate the lack of homogeneity or resemblance between any three objects 
in a given set \cite{JC}. They have applications in areas such as
psychology \cite{HB97} and phylogenetics \cite{L06}, where they have been  used
to cluster data presented in the form of three-way data tables. 
Various special classes of three-way dissimilarities have 
been introduced (see e.g. \cite{chepio,H72,HB97,JC}). These 
include three-way dissimilarities that  arise from leaf-labelled trees, 
where the edges are weighted by positive real numbers.
These so-called {\em three-way tree-metrics} and 
{\em three-way  ultrametrics}, which arise from unrooted and rooted
trees, respectively, generalize their much studied two-way 
counterparts (cf. \cite{chepio} for an overview).

Intriguingly, in \cite{BD98} B\"ocker and Dress showed
that the concept of ultrametricity for dissimilarities can be naturally
extended to include two-way symmetric maps whose range is an arbitrary set of symbols.
In particular, they introduced the concept of a {\em symbolic ultrametric} (a two-way 
map arising from a rooted, vertex-labelled tree via the least common ancestor map), 
and characterized them in terms of a 3- and a 4-point condition (see Section~\ref{sec:prelim}
for full details), a result which had in fact been discovered
independently in another guise by  V.~Gurvich \cite{G84} (see Section~\ref{sec:prelim}
for details). These conditions generalise the well-known 3-point 
condition for ultrametricity (cf. e.g. \cite[Chapter 7.2]{SS03}). Symbolic ultrametrics have 
been found to have interesting connections with cograph theory \cite{H13}, game theory \cite{G84,G09}, as well as
applications within phylogenetics \cite{Stadler,LM15}. Therefore, it is of interest to understand 
how the theory of symbolic ultrametrics can be extended to three-way maps, as these may lead to useful new applications in these areas (e.g. see the last section for a potential application in phylogenetics).

In this paper, we shall address this question. 
Let $X$ be a set (of taxa) of size at least 3 and let
$M$ be a set (of {\em symbols}) of size at least 2.  
A \emph{(three-way) symbolic map} is a map $\delta : {X \choose 3} \to M$. 
For example, consider the unrooted tree in Figure~\ref{illustrate}(a)
with leaf-set $X=\{1,2,3,4,5\}$ whose interior vertices are
labelled by elements from the set $M=\{A,B\}$. This tree 
gives rise naturally to a three-way symbolic map from $X$ to 
the set $M$; to each triple of leaves
we assign the element of $M$ which labels the vertex lying 
on all shortest paths between any two of these three leaves
(e.g. the triple $\{1,3,5\}$ is assigned the symbol $A$). We call
symbolic maps that arise in this way {\em three-way symbolic tree-maps}.

\begin{figure}[h]
	\begin{center}
		\includegraphics[scale=0.7]{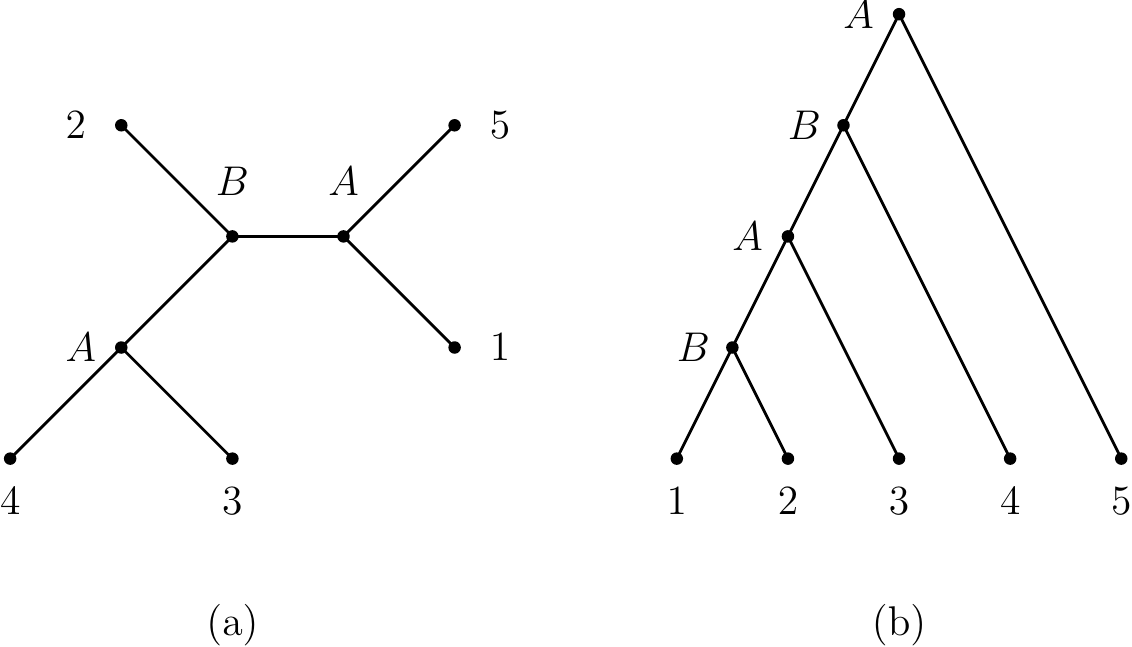}
		\caption{Two trees which give rise to (a) a three-way symbolic tree-map and (b) a three-way 
			symbolic ultrametric.}
		\label{illustrate}
	\end{center}
\end{figure}

In Section~\ref{sec:tree} we show that a three-way symbolic tree-map 
uniquely determines its underlying 
labelled tree (Proposition~\ref{treeunique}), and 
also give a 4- and 5-point characterization for such maps 
(see Theorem~\ref{th3stm}).
This result is mathematically equivalent to \cite[Theorem 5]{G84},  but 
for completeness we provide its proof.  Our characterization for three-way symbolic tree-maps is 
analogous to the well-known 4-point condition
for tree-metrics (cf. \cite[Chapter 7.1]{SS03}), and also generalizes the conditions 
presented in  \cite[Theorem 7]{HHMS12} for determining 
when a three-way dissimilarity arises from a tree. To prove Theorem~\ref{th3stm}
we introduce a symbolic variant of the Farris transform \cite[p.149]{SS03}, which allows
us to apply the main result from \cite{BD98}. We conclude the section with a 
description of how our result is related to the ones presented in \cite{G84}.

In Section~\ref{sec:ultra}, we turn our attention to obtaining 
three-way symbolic maps from rooted trees. Consider the 
rooted tree in Figure~\ref{illustrate}(b). A symbolic ultrametric
can be associated to this tree by defining the value for each pair 
of leaves to be the symbol labelling the least common ancestor vertex of
these two leaves. Therefore, a natural way to define a  
three-way symbolic ultrametric
could be to take the value of each triple of leaves to 
be the {\em set} consisting of the symbols
labelling the least common ancestor of all 
pairs of leaves in the triple (for example, we would assign the set 
$\{A,B\}$ to the triple $1,2,5$). However, this does not suffice to capture 
the tree (see Section~\ref{sec:ultra}). 

Even so, as we shall see, if we
consider the values of the triples to be {\em multisets}
instead of sets (for example, we would
assign the multiset $\{A,A,B\}$ to the triple $1,2,5$ in Figure~\ref{illustrate}(b)), then we can in fact recover 
the underlying labelled tree in case $|X| \ge 5$ (Theorem~\ref{5enough}).
We call maps obtained in this way {\em three-way symbolic ultrametrics}.
In Section~\ref{sec:five}, we give 3-, 4- and 5-point conditions
which ensure that a three-way symbolic map that maps into the set of
size 3 multisets of a set of symbols is a symbolic ultrametric. This 
is somewhat surprising since for three-way dissimilarities, a 6-point 
condition is required to ensure that they can be represented by  a rooted
tree in an analogous way (cf. \cite[Theorem 7]{HHMS12}).

We conclude the paper by considering an alternative approach
for deciding whether or not a three-way symbolic map is a tree-map or symbolic ultrametric.
This approach is based on the BUILD algorithm \cite{A81}, which 
can be used to decide when a set of {\em triplets} 
(i.\,e.\, resolved rooted leaf-labelled trees each with three leaves)
is displayed by some supertree or not. Applying this algorithm to  
three-way symbolic maps has the advantage
that only sets of size three (as opposed to sets of size up to five) 
need to be considered so as to 
determine if a three-way symbolic map is a tree-map or a symbolic ultrametric. This 
could potentially lead to practical algorithms for performing this task. 
In Section~\ref{sec:discuss}, we present some future directions.

\section{Preliminaries}\label{sec:prelim}

For a set $\{x_1, \ldots, x_k\}$, $k\geq 1$, in the powerset $\mathcal{P}(X)$
of $X$ and a map $\delta : \mathcal P(X) \to M$, 
we will write $\delta(x_1, \ldots, x_k)$ instead of $\delta(\{x_1, \ldots, x_k\})$. 

A \emph{symbolic ultrametric} \cite{BD98} is a 2-way 
symbolic map $D : {X \choose 2} \to M$ satisfying:
\begin{enumerate}
\item[(U1)] For all three distinct elements $x,y,z \in X$, at least two of the 
three values $D(x,y)$, $D(y,z)$ and $D(x,z)$ are the same.
\item[(U2)] There exists no four pairwise distinct elements $x,y,z,u \in X$ such 
that $D(x,y)=D(y,z)=D(z,u) \neq D(z,x)=D(x,u)=D(u,y)$.
\end{enumerate}

Suppose that $T$ is a tree. Then we denote by $L(T)$ the
set of leaves of $T$ and by $V^o(T):=V(T)-L(T)$ 
the set of {\em internal vertices} of $T$. If $T$ is 
rooted then we denote by $\rho_T$ the root of $T$.
Moreover, for any two distinct leaves $x$ and $y$ in $T$, 
we define the {\em least common ancestor} $\mathrm{lca}_T(x,y)$ 
of $x$ and $y$ in $T$ to be the last vertex in $T$ that lies on both of 
the paths which start at  $\rho_T$ and end 
in $x$ and in $y$. If $\mathrm{lca}_T(x,y)$ is adjacent with both
$x$ and $y$ then we call the set $\{x,y\}$ a {\em cherry} of $T$.

We also say that vertex $v$ in $T$ 
lies {\em below} a vertex $w\not=v$ in $T$ 
if $w$ lies on the path from the root of $T$ to $v$.

A \emph{(rooted/unrooted) phylogenetic tree} $T$ 
on $X$ is a (rooted/unrooted) tree with leaf-set $X$
that does not contain vertices of degree two in case
$T$ is unrooted and no vertex with indegree and outdegree
one in case $T$ is rooted. Note that we will only use
the terms rooted or unrooted in case it is not 
clear  from the context which type of tree we are considering. 
Two phylogenetic trees $T$ and $T'$ on $X$  
are {\em isomorphic} if there exists a bijection
$V(T)\to V(T')$ that induces a graph isomorphism between
$T$ and $T'$ that is the identity on $X$  (i.e. the map which takes every element in $X$ to itself). 
In case $T$ is a rooted phylogenetic tree on $X$ and $Y$ 
is a subset of $X$ with size at least two, we
let $T_Y$ denote the phylogenetic tree spanned by $Y$ 
(obtained by suppressing vertices with indegree and
outdegree one), and say that $T_Y$ is {\em induced by $Y$}.
 
A \emph{labelled (rooted/unrooted) tree} $\mathcal T$ on $X$ is a pair $(T,t)$, where $T$ 
is a (rooted/unrooted) phylogenetic tree on $X$, and $t$ is a {\em labelling map on $M$},
that is, a map from 
the internal vertices of $T$ to a set $M$ of symbols. 
If $t(u) \neq t(v)$ for every $u \neq v$ contained in the
same edge of $T$, we say that 
$\mathcal T$ is \emph{discriminating}. 
A labelled rooted tree $\mathcal T=(T,t)$ on $X$ is a \emph{representation} 
of a (two-way) symbolic map $D : {X \choose 2} \to M$ (or $\mathcal T$ 
\emph{represents}  $D$) if for all distinct $x,y \in X$, we 
have $D(x,y)=t(\mathrm{lca}_T(x,y))$.
 
\begin{theorem}[B\"ocker and Dress, 1998]\label{thBD} 
	Let $D : {X \choose 2} \to M$ be a symbolic map. There exists a 
	discriminating labelled rooted tree $\mathcal T$ that represents $D$ if and only if $D$ 
	is a symbolic ultrametric. If this holds, then such a tree is necessarily unique.
\end{theorem}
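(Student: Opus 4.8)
The plan is to prove the two implications separately: the ``only if'' direction by inspecting the least-common-ancestor structure of a putative representation, and the ``if'' direction by induction on $|X|$, building the tree from the root down; uniqueness then falls out of the same induction.

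For the ``only if'' direction, assume $\mathcal T=(T,t)$ is a discriminating labelled rooted tree representing $D$. Given distinct $x,y,z\in X$, let $v$ be the deepest vertex of $T$ below which all of $x,y,z$ lie; the two of $x,y,z$ lying in different subtrees rooted at children of $v$ have $v$ as their least common ancestor, so at least two of $\mathrm{lca}_T(x,y),\mathrm{lca}_T(y,z),\mathrm{lca}_T(x,z)$ coincide, whence at least two of $D(x,y),D(y,z),D(x,z)$ agree; this is (U1). For (U2), were pairwise distinct $x,y,z,u$ to realise the configuration forbidden by (U2), then, letting $v$ be the deepest vertex below which all four lie and examining how $x,y,z,u$ distribute among the subtrees rooted at the children of $v$ (some two of them must lie in different such subtrees and hence have $v$ as least common ancestor), a short case check shows that every distribution contradicts the prescribed equalities and inequalities; equivalently, one is rediscovering that a cotree cannot display an induced path on four vertices.

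For the ``if'' direction, let $D$ satisfy (U1) and (U2). If $|X|\le 2$ the tree is immediate, so assume $|X|\ge 3$. For $m\in M$, let $\Gamma_m$ be the graph on $X$ with edge set $\{\,\{x,y\}:D(x,y)\ne m\,\}$; note that if $\Gamma_m$ is disconnected, then any two leaves lying in distinct connected components of $\Gamma_m$ automatically have $D$-value $m$. The key step, which I expect to be the main obstacle, is to show that (U1) and (U2) force $\Gamma_m$ to be disconnected for at least one symbol $m$. Granting this, fix such an $m$, let $C_1,\dots,C_k$ with $k\ge 2$ be the connected components of $\Gamma_m$, and note that $D$ restricted to ${C_i\choose 2}$ again satisfies (U1) and (U2) while $\Gamma_m$ restricted to $C_i$ is connected. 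Apply the induction hypothesis to each $C_i$ with $|C_i|\ge 2$ to obtain a discriminating labelled tree $\mathcal T_{C_i}$ representing $D|_{{C_i\choose 2}}$, and attach all the $\mathcal T_{C_i}$, together with the singleton components as leaves, to a new root $\rho$ labelled $m$. The root of each $\mathcal T_{C_i}$ cannot carry the label $m$ (otherwise, by the structural analysis of the ``only if'' direction applied inside $C_i$, the graph $\Gamma_m$ restricted to $C_i$ would be disconnected, contrary to what we just observed), so the resulting labelled tree is discriminating; and it represents $D$ since inter-component pairs receive the value $t(\rho)=m$ while intra-component pairs are handled by the $\mathcal T_{C_i}$.

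It remains to establish the key step and uniqueness. Condition (U1) says exactly that $D$, viewed as an edge-colouring of the complete graph on $X$, has no rainbow triangle, so a structure theorem of Gallai for colourings without rainbow triangles yields a partition of $X$ into at least two blocks between which only two colours, say $a$ and $b$, occur, with a single colour appearing between each pair of blocks. Contracting each block to a point produces a complete graph two-coloured by $a$ and $b$; a four-vertex induced path in this reduced colouring would, on taking representatives from four distinct blocks, produce a four-element subset of $X$ violating (U2), so the reduced colouring is $P_4$-free, whence its $a$-edges (equivalently its $b$-edges) form a graph on the blocks that, being $P_4$-free, is disconnected or has disconnected complement; merging blocks along the corresponding two-part partition then exhibits $\Gamma_a$ or $\Gamma_b$ as disconnected on $X$. (The key step also admits a self-contained proof by a minimal-counterexample argument; in the case $|M|=2$ it reduces to the classical cotree characterisation of $P_4$-free graphs.) For uniqueness, one shows by the same induction that any discriminating representation of $D$ must have its root labelled by the unique symbol $m$ for which $\Gamma_m$ is disconnected (uniqueness of this $m$ being guaranteed once some representation is known to exist) and must have, as the leaf-sets of the subtrees hanging below its root, precisely the connected components of $\Gamma_m$; the induction hypothesis then matches those subtrees, and hence the two trees, up to isomorphism.
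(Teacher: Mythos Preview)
The paper does not supply its own proof of Theorem~\ref{thBD}: the result is quoted from B\"ocker and Dress, and the surrounding discussion merely records Gurvich's equivalent formulation via the notions of \emph{linked} and \emph{separated} edge-labelled graphs (conditions (i)--(iii) in Section~\ref{sec:prelim}). So there is no in-paper argument to compare your attempt against line by line.

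That said, your proof is correct, and it is essentially an explicit unpacking of the Gurvich route the paper only sketches. Your graphs $\Gamma_m$ coincide with the paper's graphs $H_m$; your ``key step'' (that (U1) and (U2) force some $\Gamma_m$ to be disconnected) is exactly the assertion that $(H,D)$ is not linked, and your top-down recursion is the passage from ``separated'' to ``representable'' in the equivalence (i)$\Leftrightarrow$(ii). Invoking Gallai's rainbow-triangle partition theorem and then the $P_4$-free/cograph dichotomy on the reduced two-colouring is a legitimate way to secure the key step, though it imports a nontrivial outside result; the self-contained minimal-counterexample alternative you mention would match the spirit of the paper better. Your uniqueness argument (the root label is the unique $m$ with $\Gamma_m$ disconnected, and the subtrees below the root are forced to be its components) is the standard one and is fine; note that connectedness of $\Gamma_m$ on each subtree follows because the child of the root carries a label different from $m$, so all pairs of leaves in distinct grand-subtrees are joined in $\Gamma_m$.
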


Interestingly, Theorem~\ref{thBD} appeared in a different guise 
in \cite{G84} (see also \cite{G09} for more details) in the context of game theory. 
Within this context, the leaves of the tree $T$ are seen as 
end of game situations, the label set $M$ corresponds to a set of players, and a 
directed path from the root of $T$ to a 
leaf is a sequence of plays. As we will come back to this correspondence in 
Section~\ref{sec:tree}, we now review some relevant terminology and
results presented in \cite{G84}.

Suppose that $H$ is an \emph{edge-labelled graph} on $X$, 
that is a graph with vertex set $V(H)=X$ and edge set $E(H)={X \choose 2}$, 
equipped with a map $D: E(H) \to M$ for a given, nonempty set $M$. 
Then $D$ is a (two-way) symbolic map on $X$, and any symbolic 
map $D: {X \choose 2} \to M$ 
can trivially be seen as an edge-labelled graph $(H,D)$ on $X$.

An edge-labelled graph $(H, D)$ on $X$ is said to be \emph{linked} 
if for all $m \in M$, 
the graph $H_m$ obtained from $H$ by removing all edges $e \in E(H)$ 
for which $D(e)=m$ holds is 
connected. For example, both graphs $\Delta$ and $\Pi$ depicted in Figure~\ref{vpic} are linked. 
If $(H, D)$ does not contain any linked subgraph, it is said to be \emph{separated}.

\begin{figure}[h]
\begin{center}
\includegraphics[scale=0.8]{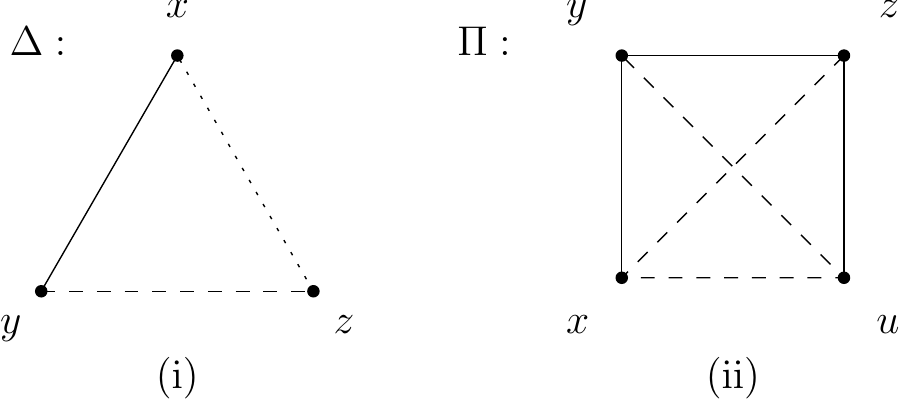}
\caption{(i) An edge-colored graph $\Delta$ on $X=\{x,y,z\}$
adapted from \cite[Figure 1]{G84}. (ii) An edge-colored 
	graph $\Pi$ on $X=\{x,y,z,u\}$ adapted from the same figure. 
Colors are represented in terms of different edge styles (plain, 
dashed and dotted). }
\label{vpic}
\end{center}
\end{figure}

The following result from \cite{G84} links the property for an edge labelled graph to 
be separated with the representability of the symbolic map it induces. 
For this, it relies on the equivalence between the following three 
statements for a 
symbolic map $D: {X \choose 2} \to M$ (see \cite[Theorem~2]{G84} for the 
equivalence between (ii) and (iii), and \cite[Theorem~4]{G84} for the equivalence 
between (i) and (ii), where a discriminating labelled rooted tree is 
called a \emph{positional structure}, or \emph{PS} for short):
\begin{itemize}
\item[(i)] There exists a (unique) discriminating labelled rooted tree $\mathcal T$ that represents $D$.
\item[(ii)] The edge-labelled graph $(H,D)$ is separated.
\item[(iii)] The edge-labelled graph $(H,D)$ does not contain any subgraph isomorphic to $\Delta$ or $\Pi$ (depicted in Figure~\ref{vpic}).
\end{itemize} 

As mentioned above, this result, and in particular the equivalence 
between conditions (i) and (iii), provides a direct equivalent to 
Theorem~\ref{thBD}. Indeed, as is easy to see a symbolic map $D: X^2 \to M$ 
satisfies (U1) (\emph{resp.} (U2)) if and only if the 
edge-labelled graph $(H,D)$ 
does not contain a subgraph isomorphic to $\Delta$ (\emph{resp.} $\Pi$), 
implying that condition (iii) and the property of being a symbolic ultrametric are equivalent.

\section{Three-way symbolic tree-maps}\label{sec:tree}

We begin by considering three-way symbolic maps that arise from labelled unrooted trees.
Such a tree $\mathcal T=(T,t)$ clearly gives rise to a three-way 
symbolic map $\delta_{\mathcal T} : {X \choose 3} \to M$
by putting, for all $x,y,z \in X$, $\delta_{\mathcal T}(x,y,z)=t(med_T(x,y,z))$, where
$med_T(x,y,z)$ denotes the {\em median vertex} 
of $x,y,z$ in $T$ (that is, the
unique vertex lying on the paths from $x$ to $y$, from $x$ to $z$ and from $y$ to $z$, respectively).

If for a three-way symbolic map $\delta: {X \choose 3} \to M$, there exists a labelled unrooted 
tree $\mathcal T$ such that $\delta=\delta_{\mathcal T}$, we say that $\delta$ 
is a {\em three-way symbolic tree-map (on $X$)}, 
and that $\mathcal T$ is a \emph{representation} of $\delta$
(or $\mathcal T$ \emph{represents} $\delta$). 
We now characterize such maps.
To do this, we define a \emph{symbolic Farris transform}, 
the definition of which is adapted from the well-known
Farris transform  \cite[p.149]{SS03} as follows. 

Suppose $\mathcal T=(T,t)$ is a labelled unrooted tree
 on $X$ where $|X|\geq 4$. Put
$\delta=\delta_{\mathcal T}$. Pick a leaf $r \in X$, and 
define a rooted phylogenetic tree $T_r$ on 
$X-\{r\}$ as follows: 
direct all edges of $T$ away from $r$, and remove $r$ 
and its outgoing edge. This induces a  bijection $\psi_r$ from 
the set of internal vertices of $T$ to the set of internal vertices of $T_r$. Hence 
the map $t_r: V(T_r)\to M$ 
which takes any internal vertex $v$ 
 of $T_r$ to $M$ given
by  $t_r(v)=t(\psi_r^{-1}(v))$ is well-defined, and 
the pair $\mathcal T_r=(T_r,t_r)$ is a labelled rooted 
tree.

Now, suppose that $\delta$ is the three-way symbolic tree-map that is 
represented by $\mathcal T$, and that $D_r$ is
the symbolic ultrametric on $X$ that is 
represented by $\mathcal T_r$. 

\begin{lemma}\label{lmddr}
For all $x,y \in X-\{r\}$ with $|X|\geq 4$, we have $D_r(x,y)=\delta(x,y,r)$.
\end{lemma}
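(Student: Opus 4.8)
The plan is to unwind both sides of the claimed identity in terms of vertices of the original labelled unrooted tree $\mathcal T=(T,t)$, and show they refer to the same vertex. Fix $x,y\in X-\{r\}$. On the left-hand side, by definition $D_r(x,y)=t_r(\mathrm{lca}_{T_r}(x,y))$ since $\mathcal T_r=(T_r,t_r)$ represents $D_r$; and $t_r(\mathrm{lca}_{T_r}(x,y))=t(\psi_r^{-1}(\mathrm{lca}_{T_r}(x,y)))$ by the definition of $t_r$. On the right-hand side, $\delta(x,y,r)=\delta_{\mathcal T}(x,y,r)=t(\mathrm{med}_T(x,y,r))$. So the lemma reduces to the purely combinatorial claim that, under the bijection $\psi_r$ between internal vertices of $T$ and internal vertices of $T_r$, we have
\[
\psi_r^{-1}\bigl(\mathrm{lca}_{T_r}(x,y)\bigr)=\mathrm{med}_T(x,y,r).
\]

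**First I would** pin down what $\mathrm{lca}_{T_r}(x,y)$ is in terms of $T$. Recall $T_r$ is obtained by rooting $T$ "at $r$": direct all edges away from $r$ and delete $r$ together with its pendant edge, so the root $\rho_{T_r}$ is the (former) neighbour of $r$ in $T$. Crucially, the ancestor relation in $T_r$ is exactly the "lies-on-the-path-from-$r$" relation in $T$: a vertex $w$ is an ancestor of a vertex $v$ in $T_r$ if and only if $w$ lies on the path in $T$ from $r$ to $v$. Hence $\mathrm{lca}_{T_r}(x,y)$ is the last common vertex of the $r$-to-$x$ path and the $r$-to-$y$ path in $T$ — equivalently, the vertex at which these two paths diverge. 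Here I am implicitly identifying internal vertices of $T_r$ with internal vertices of $T$ via $\psi_r$; so really I should say $\psi_r^{-1}(\mathrm{lca}_{T_r}(x,y))$ is this branching vertex of the paths $r\!\to\!x$ and $r\!\to\!y$ in $T$.

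**Next I would** identify that branching vertex with the median $\mathrm{med}_T(x,y,r)$. In a tree, the median of three leaves $a,b,c$ is the unique vertex common to all three pairwise paths; equivalently it is the unique vertex $m$ such that the three paths from $m$ to $a$, to $b$, to $c$ are pairwise edge-disjoint (they meet only at $m$). For $a=x$, $b=y$, $c=r$: the vertex where the $r$-to-$x$ and $r$-to-$y$ paths split is precisely common to the $r\!\to\!x$ path and the $r\!\to\!y$ path; and since it is the \emph{last} common vertex of these two paths, the path from it to $x$ and the path from it to $y$ are edge-disjoint, while the portion of the $r\!\to\!x$ path (equivalently the $r\!\to\!y$ path) from $r$ up to it is the $r$-to-$m$ path — giving the $x$-to-$y$ path through $m$ as well. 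So this vertex lies on all three pairwise paths, hence equals $\mathrm{med}_T(x,y,r)$ by uniqueness of the median. This also confirms it is an internal vertex of $T$ (it has degree $\geq 3$ as the three paths leave it in three distinct directions, using that $x,y,r$ are distinct leaves), so $\psi_r^{-1}$ is legitimately applied to it and everything is well-defined; one should just note the edge case $|X|=4$ causes no trouble since the median of three distinct leaves always exists in any tree with at least these three leaves.

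**The main obstacle** is nothing deep — it is purely bookkeeping the identification of internal vertices of $T$ with those of $T_r$ through $\psi_r$ and checking that "ancestor in $T_r$" translates correctly to "on the $r$-path in $T$," because $\psi_r$ is defined by the edge-redirection-and-deletion procedure rather than by an explicit formula. I would state the ancestor/path dictionary as a one-line observation and then the rest is the standard characterisation of medians in trees. No genuinely hard step is expected.
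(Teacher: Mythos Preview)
Your proposal is correct and follows exactly the same route as the paper: both reduce the identity to the observation that under the symbolic Farris transform the median $\mathrm{med}_T(x,y,r)$ in $T$ corresponds via $\psi_r$ to $\mathrm{lca}_{T_r}(x,y)$, then chain through $D_r(x,y)=t_r(v)=t(\psi_r^{-1}(v))=t(\mathrm{med}_T(x,y,r))=\delta(x,y,r)$. The paper simply asserts the median-to-lca correspondence in one sentence, whereas you spell out the ancestor/path dictionary and the median characterisation explicitly; the underlying argument is identical.
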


\begin{proof}[Proof]
It suffices to note that via the symbolic Farris transform, 
the median vertex of $x$, $y$ and $r$ in $T$ becomes 
the least common ancestor of $x$ and $y$ in $T_r$. Denoting the latter by $v$, we then have $D_r(x,y)=t_r(v)=t(\psi_r^{-1}(v))=t(med_T(x,y,r))=\delta(x,y,r)$.
\end{proof}

Motivated by this observation, for a three-way 
symbolic map $\delta : {X \choose 3} \to M$ and some $r \in X$ where $|X|\geq 4$, we 
define the map 
$$\delta_r: {X-\{r\} \choose 2} \to M; \,\, \delta_r(x,y)=\delta(x,y,r),
$$ 
for all $x,y  \in X$
distinct (which can be considered as a symbolic analogue
of the Farris transform as defined in \cite[p.149]{SS03}).
Using Lemma~\ref{lmddr}, we can now prove a 
uniqueness result.

\begin{proposition}\label{treeunique}
Let $\delta: {X \choose 3} \to M$ be a three-way symbolic tree-map where $|X|\geq 4$. There 
exists a unique discriminating labelled unrooted tree $\mathcal T$ that represents $\delta$.
\end{proposition}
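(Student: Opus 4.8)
The plan is to reduce the uniqueness claim for the unrooted labelled tree to the known uniqueness of the discriminating labelled rooted tree (Theorem~\ref{thBD}) via the symbolic Farris transform. First I would establish existence: since $\delta$ is a three-way symbolic tree-map, it is represented by some labelled unrooted tree $\mathcal T = (T,t)$. If $\mathcal T$ is not discriminating, I would collapse every edge whose two endpoints carry the same label, i.e.\ identify the endpoints and keep the common label. One checks that this operation does not change the median vertex's label for any triple (contracting such an edge never removes a vertex that is a median of three leaves with a \emph{different} label), so the resulting discriminating labelled tree still represents $\delta$. This handles existence.

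For uniqueness, suppose $\mathcal T = (T,t)$ and $\mathcal T' = (T',t')$ are both discriminating labelled unrooted trees representing $\delta$. Fix a leaf $r \in X$ and form the symbolic Farris transforms $\mathcal T_r = (T_r, t_r)$ and $\mathcal T'_r = (T'_r, t'_r)$ as in the construction preceding Lemma~\ref{lmddr}. By Lemma~\ref{lmddr}, both $\mathcal T_r$ and $\mathcal T'_r$ represent the same two-way symbolic map $\delta_r$ on $X - \{r\}$. Next I would observe that the Farris transform preserves the discriminating property: an edge of $T_r$ corresponds under $\psi_r$ to an edge of $T$, and the labels of its endpoints agree with the labels of the corresponding endpoints in $T$, so distinctness of labels along edges is inherited. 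Hence $\mathcal T_r$ and $\mathcal T'_r$ are both discriminating labelled rooted trees representing $\delta_r$, and by Theorem~\ref{thBD} they are isomorphic (as labelled rooted trees, via a bijection fixing $X - \{r\}$).

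Finally I would invert the Farris transform to transfer this isomorphism back to $\mathcal T$ and $\mathcal T'$. Given the rooted tree $T_r$ together with the label map $t_r$, one recovers $T$ by re-attaching the leaf $r$: the root of $T_r$ becomes a vertex adjacent to $r$, and edge directions are forgotten. The subtlety here — and what I expect to be the main obstacle — is that the isomorphism $T_r \to T'_r$ furnished by Theorem~\ref{thBD} need not obviously respect the re-attachment point of $r$, and moreover after re-attaching $r$ one may create a degree-two vertex at the former root that must be suppressed to obtain a genuine phylogenetic tree; one must check that suppression is compatible with the labelling (the suppressed vertex and its neighbour along the edge to $r$, if any, carry labels that do not conflict) and that doing this for $T$ and for $T'$ yields isomorphic labelled trees. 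I would argue that since $|X| \ge 4$, there is at least one further leaf, so the structure near $r$ is pinned down by the values $\delta(x,y,r)$; alternatively, one can run the argument for two distinct choices of $r$ and glue, or invoke a direct check that the inverse Farris construction is functorial on isomorphisms. Once this bookkeeping is done, the labelled-rooted-tree isomorphism $\mathcal T_r \cong \mathcal T'_r$ lifts to a labelled-unrooted-tree isomorphism $\mathcal T \cong \mathcal T'$ fixing $X$, which is the desired uniqueness.
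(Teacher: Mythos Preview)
Your approach matches the paper's: apply the symbolic Farris transform at a leaf $r$, invoke Theorem~\ref{thBD} to get uniqueness of the discriminating rooted representation of $\delta_r$, and invert. The obstacles you anticipate are not genuine: the isomorphism furnished by Theorem~\ref{thBD} is an isomorphism of \emph{rooted} labelled trees and hence carries root to root, so the re-attachment point of $r$ is automatically respected; and since $|X-\{r\}|\ge 3$ the root of $T_r$ has outdegree at least~$2$, so attaching $r$ produces a vertex of degree at least~$3$ and no suppression of degree-two vertices is ever required (the new edge $\{r,\rho_{T_r}\}$ meets the leaf $r$, which carries no label, so the discriminating condition is vacuous there as well).
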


\begin{proof}[Proof]
Let $r \in X$ and consider the map $\delta_r: {X-\{r\} \choose 2} \to M$. 
By Lemma~\ref{lmddr}, $\delta_r$ is a symbolic ultrametric, and thus, admits a unique discriminating 
representation $\mathcal T_r$. Moreover, this representation is obtained from 
a representation of $\delta$, using the symbolic Farris transform. This operation 
is clearly invertible, and preserves the property of being discriminating. 
Thus, the labelled unrooted tree $\mathcal T$ obtained from $\mathcal T_r$ by 
inverting the symbolic Farris transform is necessarily 
the only discriminating representation of $\delta$.
\end{proof}

We now characterize three-way symbolic tree maps. As we shall 
explain below, an equivalent characterization appears in \cite{G84} 
in the guise of Theorem~5 of that paper. For the sake of completeness, 
we present a proof within our framework. Subsequent to
this, we explain how the approach in \cite{G84} relates to ours.

\begin{theorem}\label{th3stm}
Suppose that $|X|\geq 4 $ and that 
$\delta:{X \choose 3} \to M$ is a three-way symbolic map.
Then $\delta$ is a three-way symbolic tree-map 
if and only if $\delta$ satisfies the following two conditions: 
\begin{enumerate}
\item[(M1)] For all $\{x,y,z,u\} \in {X \choose 4}$, either
$$
\delta(x,y,z)=\delta(x,y,u)=\delta(x,z,u)=\delta(y,z,u)
$$
or two of these four are equal  and so are the remaining two.\\
\item[(M2)] There does not exist $\{x,y,z,u,v\} \in {X \choose 5}$ such that
$$
 \delta(v,x,y)=\delta(v,y,z)=\delta(v,z,u) \neq \delta(v,z,x) =\delta(v,x,u)= \delta(v,u,y).
$$
\end{enumerate}
\end{theorem}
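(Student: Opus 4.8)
The plan is to reduce this to the two-way symbolic ultrametric theorem of B\"ocker and Dress (Theorem~\ref{thBD}) through the symbolic Farris transform, in the same spirit as the proof of Proposition~\ref{treeunique}. For the ``only if'' direction, suppose $\delta=\delta_{\mathcal T}$ for a labelled unrooted tree $\mathcal T=(T,t)$. Condition (M2) then falls out of the two-way theory: by Lemma~\ref{lmddr}, for each $v\in X$ the map $\delta_v$ coincides with the symbolic ultrametric $D_v$ represented by $\mathcal T_v$, so by Theorem~\ref{thBD} it satisfies (U2), and (U2) for $\delta_v$ is precisely the instance of (M2) for the $5$-subsets containing $v$; letting $v$ range over $X$ gives (M2). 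Condition (M1) I would instead read directly off the median structure of $T$: for any four leaves $x,y,z,u$, a short case analysis on the vertex at which the fourth leaf meets the subtree of $T$ spanned by the other three shows that the four medians $med_T(x,y,z)$, $med_T(x,y,u)$, $med_T(x,z,u)$, $med_T(y,z,u)$ are either all equal (the ``star'' case) or take exactly two values, each occurring twice, with the pairing dictated by the quartet topology of $\{x,y,z,u\}$ in $T$. Applying $t$ yields (M1).

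For the ``if'' direction, assume $\delta$ satisfies (M1) and (M2) and fix any $r\in X$. First I would check that $\delta_r:{X-\{r\}\choose 2}\to M$ is a symbolic ultrametric: (U2) for $\delta_r$ is exactly the $v=r$ instance of (M2), while (U1) for $\delta_r$ -- that two of $\delta(x,y,r),\delta(x,z,r),\delta(y,z,r)$ agree -- follows from (M1) applied to $\{x,y,z,r\}$, since the four values there assume at most two distinct symbols and hence two of any three of them coincide. By Theorem~\ref{thBD}, $\delta_r$ is represented by a discriminating labelled rooted tree $\mathcal T_r$ on $X-\{r\}$. Inverting the symbolic Farris transform -- attach a new leaf $r$ to the root of $T_r$, forget the edge directions, and transport the labelling along the canonical bijection of interior vertices -- produces a labelled unrooted tree $\mathcal T=(T,t)$ on $X$ whose Farris transform at $r$ is $\mathcal T_r$; one checks en route that $T$ has no vertex of degree two, so it is a genuine phylogenetic tree on $X$.

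It remains to show $\delta=\delta_{\mathcal T}$, and this is the step I expect to be the crux. Lemma~\ref{lmddr} applied to $\mathcal T$ gives $\delta_{\mathcal T}(x,y,r)=D_r(x,y)=\delta_r(x,y)=\delta(x,y,r)$, so $\delta$ and $\delta_{\mathcal T}$ agree on every triple meeting $r$; but the Farris transform says nothing a priori about triples $\{x,y,z\}$ avoiding $r$. To bridge this gap I would invoke (M1) for both $\delta$ and $\delta_{\mathcal T}$ (the latter holding by the ``only if'' direction, since $\delta_{\mathcal T}$ is a tree-map): writing $a=\delta(x,y,r)$, $b=\delta(x,z,r)$, $c=\delta(y,z,r)$, condition (M1) on $\{x,y,z,r\}$ forbids $a,b,c$ from being three distinct symbols and then determines $\delta(x,y,z)$ uniquely -- it must be the symbol occurring an odd number of times among $a,b,c$ (their common value when $a=b=c$). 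Since $\delta_{\mathcal T}(x,y,r)=a$, $\delta_{\mathcal T}(x,z,r)=b$, $\delta_{\mathcal T}(y,z,r)=c$, the same rule forces $\delta_{\mathcal T}(x,y,z)$ to equal the identical symbol, whence $\delta(x,y,z)=\delta_{\mathcal T}(x,y,z)$ and the proof is complete. The genuine obstacle is precisely this passage from $r$-triples to all triples: one has to make the uniqueness statement sharp -- that, granted (M1), the three ``$r$-triples'' of a $4$-set pin down the fourth triple -- whereas everything else is a bookkeeping translation between (M1)/(M2) and (U1)/(U2).
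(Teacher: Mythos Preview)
Your proposal is correct and follows essentially the same route as the paper: both directions go through the symbolic Farris transform and Lemma~\ref{lmtmu}/Lemma~\ref{lmddr}, reducing (M1)/(M2) to (U1)/(U2) for $\delta_r$. The only cosmetic difference is in the final verification that $\delta=\delta_{\mathcal T}$ on triples avoiding $r$: the paper computes $t(med_T(x,y,z))$ directly from the tree and then invokes (M1) once for $\delta$, whereas you invoke (M1) symmetrically for both $\delta$ and $\delta_{\mathcal T}$ via your ``odd-one-out'' observation---but this is the same argument.
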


In order to prove Theorem~\ref{th3stm}, we start with a useful lemma.

\begin{lemma}\label{lmtmu}
Suppose	that $|X|\geq 4 $  and that
$\delta:{X \choose 3} \to M$ is a three-way symbolic map 
satisfying (M1) and (M2). Then for all $r \in X$, the map $\delta_r$ is a symbolic ultrametric.
\end{lemma}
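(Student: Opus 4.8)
The plan is to verify directly that, for a fixed $r\in X$, the two-way symbolic map $\delta_r:\binom{X-\{r\}}{2}\to M$ satisfies conditions (U1) and (U2); by the definition of a symbolic ultrametric given in Section~\ref{sec:prelim}, this is exactly what has to be shown. Both verifications amount to reading off (M1) and (M2) after ``freezing'' the third coordinate to be $r$.

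For (U1): given three distinct $x,y,z\in X-\{r\}$, apply hypothesis (M1) to the $4$-set $\{x,y,z,r\}\in\binom{X}{4}$. Writing $a=\delta(x,y,z)$, $b=\delta(x,y,r)$, $c=\delta(x,z,r)$, $d=\delta(y,z,r)$, condition (M1) forces the multiset $\{a,b,c,d\}$ to have the form $\{p,p,p,p\}$ or $\{p,p,q,q\}$; equivalently, there is a partition of $\{a,b,c,d\}$ into two pairs, each of which is constant. In the first case $b=c=d$. In the second case $a$ is paired with exactly one of $b,c,d$, and then the other two of $b,c,d$ form the second pair and hence coincide. Either way at least two of $b,c,d$ agree, i.e.\ at least two of $\delta_r(x,y),\delta_r(x,z),\delta_r(y,z)$ are equal, which is precisely (U1).

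For (U2): suppose for contradiction that there are four pairwise distinct $x,y,z,u\in X-\{r\}$ with $\delta_r(x,y)=\delta_r(y,z)=\delta_r(z,u)\neq\delta_r(z,x)=\delta_r(x,u)=\delta_r(u,y)$. Unravelling the definition of $\delta_r$ and setting $v:=r$, this says exactly that $\delta(v,x,y)=\delta(v,y,z)=\delta(v,z,u)\neq\delta(v,z,x)=\delta(v,x,u)=\delta(v,u,y)$; since $r\notin X-\{r\}$, the five elements $x,y,z,u,v$ are pairwise distinct and so $\{x,y,z,u,v\}\in\binom{X}{5}$, contradicting (M2). Hence (U2) holds for $\delta_r$, and combining the two steps $\delta_r$ is a symbolic ultrametric. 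I expect the only points needing a little care are the bookkeeping in the (U1) case analysis (checking that the ``$2+2$'' pattern always leaves two of the three $r$-containing values equal) and matching the roles of the variables in (M2) with those in (U2); both are essentially immediate, since (M1) and (M2) are precisely the $3$-point and $4$-point conditions for the maps $\delta_r$, lifted by one coordinate.
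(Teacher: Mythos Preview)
Your proof is correct and follows essentially the same approach as the paper's own proof: verify (U1) by applying (M1) to $\{x,y,z,r\}$ and verify (U2) by contradiction using (M2) with $v=r$. The only difference is cosmetic---the paper states the (U1) step more tersely (``the set $\{\delta(r,x,y),\delta(r,x,z),\delta(r,y,z)\}$ contains at most two distinct elements'') while you spell out the $2+2$ pigeonhole explicitly; both are the same argument.
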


\begin{proof}[Proof]
Let $r \in X$. We need to show that $\delta_r$ satisfies properties (U1) and (U2).

To see that $\delta_r$ satisfies (U1), consider three elements $x,y,z \in X- \{r\}$.
Since $\delta$ satisfies (M1) the set $\{\delta(r,x,y),\delta(r,x,z),\delta(r,y,z)\}$
contains at most two distinct elements. As this set 
is precisely the set $\{\delta_r(x,y),\delta_r(x,z),\delta_r(y,z)\}$, (U1) follows.

To see that (U2) holds, assume for contradiction that there 
exist four pairwise distinct
elements $x,y,z,u \in X-\{r\}$ such that 
$\delta_r(x,y)=\delta_r(y,z)=\delta_r(z,u) \neq \delta_r(z,x)=\delta_r(x,u)=\delta_r(u,y)$. 
This implies $\delta(r,x,y)=\delta(r,y,z)=\delta(r,z,u) \neq \delta(r,z,x) =\delta(r,x,u)= \delta(r,u,y)$, 
which is impossible in view of (M2).
\end{proof}

Note that the converse of the Lemma~\ref{lmtmu} is not true in general. 
Consider for example the sets $X=\{1, \ldots, n\}$, $n \geq 4$, $M=\{A,B\}$, 
and the map $\delta:{X \choose 3} \to M$ defined for $x,y,z \in X$ by putting $\delta(x,y,z)=A$ 
if $1 \in\{x,y,z\}$ and $\delta(x,y,z)=B$ otherwise. 
Clearly, $\delta$ does not satisfy (M1), as we 
have $\delta(1,2,3)=\delta(1,2,4)=\delta(1,3,4) \neq \delta(2,3,4)$. 
However, we have $\delta_1(x,y)=A$ for all $x,y \in X-\{1\}$, 
which is clearly a symbolic ultrametric. 
In fact, for any $2 \leq k \leq n$ we have $\delta_k(x,y)=A$ if $1 \in \{x,y\}$ and $\delta_k(x,y)=B$ 
otherwise and, so, $\delta_k$ is also a symbolic ultrametric on $X-\{k\}$.

Armed with Lemma~\ref{lmtmu}, we can now prove Theorem~\ref{th3stm}.

\begin{proof}[Proof]
Assume first that $\delta$ is a three-way symbolic tree-map, and denote by $\mathcal T=(T,t)$ its representation.
To see that $\delta$ satisfies (M1), consider four 
pairwise distinct elements $x,y,z,u \in X$. Two cases may occur. If $med_T(x,y,z)=med_T(x,y,u)=med_T(x,z,u)=med_T(y,z,u)$, it follows immediately that $\delta(x,y,z)=\delta(x,y,u)=\delta(x,z,u)=\delta(y,z,u)$. Otherwise, there exists 
two pairs, say $\{x,y\}$ and $\{z,u\}$, such that the path between $x$ and $y$ and 
the path between $z$ and $u$ are disjoint. In this case, we have 
$med_T(x,y,z)=med_T(x,y,u) \neq med_T(x,z,u)=med_T(y,z,u)$. 
If $t(med_T(x,y,z))=t(med_T(x,z,u))$, it follows that $\delta(x,y,z)=\delta(x,y,u)=\delta(x,z,u)=\delta(y,z,u)$. 
Otherwise, we have $\delta(x,y,z)=\delta(x,y,u) \neq \delta(x,z,u)=\delta(y,z,u)$. Thus, $\delta$ satisfies (M1).

If $|X|=4$ then it is straight forward to check that
the theorem holds. So assume that $|X|\geq 5$.
To see that $\delta$ satisfies (M2), assume for contradiction that there exist pairwise distinct 
$x,y,z,u,v \in X$ such that $\delta(v,x,y)=\delta(v,y,z)=\delta(v,z,u) \neq \delta(v,z,x) =\delta(v,x,u)= \delta(v,u,y)$. We can apply the
symbolic Farris transform to $\mathcal T$ and $v$, thus obtaining a labelled rooted tree $\mathcal T_v$.
By Lemma~\ref{lmddr}, $\mathcal T_v$ is a representation of $\delta_v$, 
implying that $\delta_v$ is a symbolic ultrametric. But, by definition, 
$\delta_v$ satisfies $\delta_v(x,y)=\delta_v(y,z)=\delta_v(z,u) \neq \delta_v(z,x) =\delta_v(x,u)= \delta_v(u,y)$, 
which contradicts (U2).

Conversely, assume that $\delta$ satisfies Properties (M1) and (M2), and let $r \in X$.
By Lemma~\ref{lmtmu}, the map $\delta_r$ is a symbolic ultrametric. Thus there exists a labelled rooted
tree $\mathcal T_r=(T_r,t_r)$ on $X-\{r\}$ representing $\delta_r$. Consider the 
labelled unrooted tree $\mathcal T=(T,t)$ on
$X$ defined as follows. First, add a new vertex $r$ to $T_r$ and 
the edge $\{\rho_{T_r},r\}$. Then consider all edges 
in the resulting tree to be undirected. Let $t:V^o(T)\to M$
denote the map given by $t(v)=t_r(v)$, for all $v\in V^o(T)$.
 We claim that for all $\{x,y,z\} \in {X \choose 3}$, we have $\delta(x,y,z) = t(med_T(x,y,z))$, 
that is, $\mathcal T$ is a representation of $\delta$.
To prove this it suffices to consider two cases. Suppose $\{x,y,z\} \in {X \choose 3}$.

\noindent{\em Case (a):} $\{x,y,z\} \subseteq X-\{r\}$. Without loss of generality, 
$\delta_r(x,z)=\delta_r(y,z)=t_r(u)$ and $\delta_r(x,y)=t_r(v)$, where $u$ and $v$ are vertices of $T_r$, and $v$ is below
or equal to $u$ in $T_r$. In this case  $t(med_T(x,y,z))$ 
equals $t_r(v)$.
By (M1) and since $\delta(x,z,r)=\delta(y,z,r)$, we have $\delta(x,y,z)=\delta(x,y,r)=t_r(v)=t(med_T(x,y,z))$.
Thus, $\mathcal T$ is a representation of $\delta$ in this
case.

\noindent{\em Case (b):} $r \in \{x,y,z\}$, say $r=z$. If we denote by $v$ the least common ancestor of
$x$ and $y$ in $T_r$, then $t(med_T(x,y,z))=t_r(v)$. 
Hence
$\delta(x,y,r)=\delta_r(x,y)= t_r(v)=t(med_T(x,y,r))$.
Thus, $\mathcal T$ is a representation of $\delta$ in this
case, too.
\end{proof}

We next elaborate on the relationship between Theorem~\ref{th3stm}
and Theorem~5 in \cite{G84}. 
As mentioned in Section~\ref{sec:prelim}, a symbolic two-way map 
$D: {X \choose 2} \to M$ can be seen as an edge-labelled graph $(H,D)$. 
Similarily, a symbolic three-way map $\delta: {X \choose 3} \to M$ can be 
seen as an edge-labelled 3-hypergraph $(\mathcal H,\delta)$, where by 
3-hypergraph, we mean that the edges of $\mathcal H$ are sets of three
 vertices (instead of two for graphs). Within this context, the vertex set of a 
3-hypergraph $\mathcal H$ associated to a 3-way map $\delta$ is $X$, 
as in the case of two-way maps, and the edge set of 
$\mathcal H$ is ${X \choose 3}$.

The author of \cite{G84} uses as a starting point for his characterization the equivalence, presented in Section~\ref{sec:prelim}, between symbolic 2-way maps that can be represented by a rooted tree and edge-labelled graphs that do not contain any subgraph isomorphic to the graphs $\Delta$ or $\Pi$ (see Figure~\ref{vpic}). The idea underlylng \cite[Theorem 5]{G84} is the following. From an edge-labelled 3-hypergraph $(\mathcal H, \delta)$ on $X$, we can pick an element $r \in X$ and consider the edge-labelled graph $(H,\delta_r)$ on $X-\{r\}$. It is then possible to highlight three edge-labelled 3-hypergraph $\delta_2, \delta_3, \delta_4$ with four vertices, that get transformed into edge-labelled graphs isomorphic to $\Delta$ via that operation, and one edge-labelled 3-hypergraph $\pi$ with five vertices, that gets transformed into an edge-labelled graph isomorphic to $\Pi$. The equivalence between the representability of $\delta$ by a labelled unrooted tree and the representability of $\delta_r$ by a labelled rooted tree for all $r \in X$ then leads to the conclusion that an edge-labelled 3-hypergraph $(\mathcal H, \delta)$ is representable if and only if it does not contain a sub(hyper)graph isomorphic to any of $\delta_2, \delta_3,\delta_4$ and $\pi$. 

As it turns out, $(\mathcal H, \delta)$ contains a sub(hyper)graph isomorphic to one of $\delta_2, \delta_3,\delta_4$ (\emph{resp.} to $\pi$) if and only if $\delta$ does not satisfy (M1) (\emph{resp.} (M2)). This implies 
that  Theorem~\ref{th3stm} and Theorem~5 in \cite{G84} are equivalent.

Finally, note that a similar result also appears in \cite{GLW17}. However, the arguments used by the authors of \cite{GLW17} do not rely on the projection of a three-way map to a two-way map and of an unrooted tree to a rooted tree, as is the case both here and in \cite{G84}.

\section{Three-way symbolic ultrametrics}\label{sec:ultra}

In the last section, we considered the problem of deciding when a three-way symbolic map
arises from a labelled unrooted tree. In this section, we start to
consider this problem for their rooted counterparts.  
In particular, after defining the concept of a three-way symbolic ultrametric,
we shall show that to determine whether or not 
a three-way symbolic map is a symbolic ultrametric, it suffices to consider 
its restriction to sets of size five.  

We begin by considering how to define
a three-way symbolic ultrametric.
If we consider 3 distinct leaves $x,y,z$ of a rooted phylogenetic tree $T$ on $X$, then
we can clearly identify two internal vertices of the tree given by the set 
$\{\mathrm{lca}_T(x,y), \mathrm{lca}_T(x,z), \mathrm{lca}_T(y,z)\}$ (in contrast to 
unrooted phylogenetic trees where we can identify only one, namely
the median of the 3 leaves).   A natural approach to
obtain a three-way symbolic map $\delta$ from a labelled
rooted  tree $\mathcal T=(T,t)$ might 
therefore be to 
take $\delta(x,y,z)$ to be the set $\{t(\mathrm{lca}_T(x,y)), t(\mathrm{lca}_T(x,z)), t(\mathrm{lca}_T(y,z))\}$,
for $x,y,z\in X$ distinct.
However, as can be seen in  Figure~\ref{figsid} such a map does not
necessarily uniquely capture $\mathcal T$.
For this reason, we shall consider instead  maps to multisets. 

\begin{figure}[h]
\begin{center}
\includegraphics[scale=0.7]{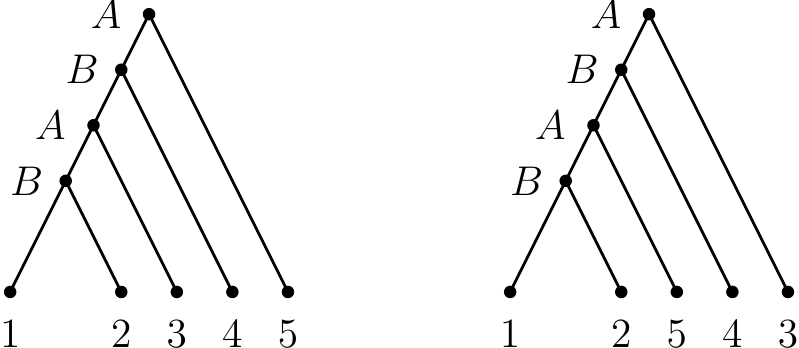}
\caption{Two labelled rooted trees on $X=\{1,2,3,4,5\}$ with labelling maps
	$t$ and $t'$ on $M=\{A,B\}$, respectively, for
	which the sets 
	$\{t(\mathrm{lca}_T(x,y)), t(\mathrm{lca}_T(x,z)), 
t(\mathrm{lca}_T(y,z))\}$ and
	$ \{t'(\mathrm{lca}_T(x,y)), t'(\mathrm{lca}_T(x,z)), t'(\mathrm{lca}_T(y,z))\}$ coincide,  for any three elements $x,y,z \in X$ 
	distinct.
\label{figsid}
}
\end{center}
\end{figure}

To formalize this, let
$\mathcal M=\mathcal M_M$
denote 
the set 
of multisets $\{a,b,c\}$ with $ a,b,c \in M$. As it will be useful later on, 
we shall also sometimes denote an element in $\mathcal M$ as  a sum. So, for example, for the 
element $\{a,a,b\} \in \mathcal M$ with $a,b\in M$, we sometimes also write $2a+b$. 

Now, given a labelled rooted 
tree $\mathcal T=(T,t)$ on $X$, we define the three-way symbolic map 
$\delta_{\mathcal T}: {X \choose 3} \to \mathcal M$ by putting 
$$
\delta_{\mathcal T}(x,y,z)=\{t(\mathrm{lca}_T(x,y)),t(\mathrm{lca}_T(x,z)),t(\mathrm{lca}_T(y,z))\}.
$$
for all distinct $x,y,z \in X$. If for a three-way symbolic map 
$\delta: {X \choose 3} \to \mathcal M$
there exists a labelled rooted 
tree $\mathcal T=(T,t)$ on $X$ such that $\delta = \delta_{\mathcal T}$,
then we call $\delta$ a \emph{three-way symbolic ultrametric (on $X$)}. Thus, intuitively, $\delta$ is a three-way symbolic ultrametric if it can be represented by labelling a rooted tree on $X$ in such a way that, for every 3-subset $\{x,y,z\}$ of $X$, $\delta(x,y,z)$ is the multiset consisting of the labels of the least common ancestors for all pairs of elements in $\{x,y,z\}$.
In addition, we say that $\mathcal T$ is a 
{\em representation} for $\delta$ (or that $\mathcal T$ {\em represents} $\delta$). 
We say that $\mathcal T$ is {\em discriminating} if $t(u)\not=t(v)$,
for every $u \neq v$ contained in the same edge in $T$.
Note that we can think of $\delta$ as a symbolic
analogue of a three-way perimeter map which arises from a weighted tree $T$
by taking, for any three leaves of $T$, the length of subtree
spanned by the those  leaves (see e.g.\cite{chepio}). 
Also, note that  by (U1), $\delta$
must satisfy the following property:

\begin{lemma}\label{lms2}
	Let $\delta : {X \choose 3} \to 
	\mathcal M$
	be a three-way symbolic 
	ultrametric. Then, for any three distinct elements $x,y,z \in X$, the number of 
	distinct elements in the multiset $\delta(x,y,z)$ is at most two.
\end{lemma}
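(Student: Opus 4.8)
The plan is to unwind the definitions and reduce the claim to property (U1) of the two-way symbolic ultrametric $D_r$ obtained from a representation of $\delta$. Since $\delta$ is a three-way symbolic ultrametric, there is a labelled rooted tree $\mathcal T=(T,t)$ on $X$ with $\delta=\delta_{\mathcal T}$. Fix three distinct elements $x,y,z\in X$. By definition,
$$
\delta(x,y,z)=\{t(\mathrm{lca}_T(x,y)),\,t(\mathrm{lca}_T(x,z)),\,t(\mathrm{lca}_T(y,z))\},
$$
so the number of distinct elements in this multiset is at most the number of distinct elements among the three labels $t(\mathrm{lca}_T(x,y))$, $t(\mathrm{lca}_T(x,z))$, $t(\mathrm{lca}_T(y,z))$.

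The key structural observation is that, among the three vertices $\mathrm{lca}_T(x,y)$, $\mathrm{lca}_T(x,z)$, $\mathrm{lca}_T(y,z)$ of $T$, two must coincide: in a rooted tree, if one orders $x,y,z$ so that, say, $\mathrm{lca}_T(x,y)$ lies (weakly) below $\mathrm{lca}_T(x,z)$, then $\mathrm{lca}_T(x,z)=\mathrm{lca}_T(y,z)=\mathrm{lca}_T(x,y,z)$. Hence at most two of the three vertices are distinct, and therefore at most two of the three labels are distinct, which immediately gives the bound. So in fact the statement follows directly from the tree structure without even invoking (U1); alternatively, one can phrase it via (U1) applied to the associated two-way symbolic ultrametric $D$ with $D(a,b)=t(\mathrm{lca}_T(a,b))$, which asserts precisely that among $D(x,y),D(x,z),D(y,z)$ at least two agree.

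The only mild subtlety — and the one place where care is needed — is handling the degenerate configurations, in particular the case where all three of $\mathrm{lca}_T(x,y)$, $\mathrm{lca}_T(x,z)$, $\mathrm{lca}_T(y,z)$ are equal (the ``star'' case, giving a multiset of the form $3a$), and confirming that the ``two distinct vertices'' case still permits equal labels so the count of distinct symbols can drop to one. In either case the count of distinct symbols is at most two, so no genuine obstacle arises; the proof is a short paragraph once the lca-coincidence fact is stated.
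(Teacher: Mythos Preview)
Your proposal is correct and aligns with the paper's own justification: the paper does not give a formal proof but simply remarks, immediately before the lemma, that the statement follows from (U1). Your argument makes this explicit by noting that the associated two-way map $D(a,b)=t(\mathrm{lca}_T(a,b))$ is a symbolic ultrametric and hence satisfies (U1), and you additionally spell out the underlying structural reason (that two of the three least common ancestors coincide), which is exactly why (U1) holds for tree-derived maps.
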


We now turn our attention to showing that we can  determine whether or not 
a three-way symbolic map $\delta: {X \choose 3} \to \mathcal M$ 
is a symbolic ultrametric by restricting $\delta$ to subsets of $X$ with size five.  
To do this, we first need  to introduce 
some additional notation. For a subset $Y$ of $X$ of size four or more, 
let $\delta|_Y$ denote the restriction of 
$\delta$ to ${Y \choose 3}$,  that is, the map obtained by restricting the map $\delta$ to the subset $Y \choose 3$ of $X \choose 3$.. 
Note that  if $\delta$ is a three-way symbolic ultrametric, then 
$\delta|_{Y}$ is a three-way  symbolic ultrametric for all subsets $Y \subseteq X$ 
with $|Y| \ge 4$. Indeed, if $\mathcal T$ is a representation of $\delta$, then the 
subtree $\mathcal T_Y$ of $\mathcal T$ induced by $Y$ is a 
representation of $\delta|_Y$. Furthermore, we obtain a discriminating 
representation of $\delta|_Y$ by collapsing all edges of $\mathcal T_Y$  
both of whose end vertices have the same label.

\begin{figure}[h!]
\begin{center}
\includegraphics[scale=0.7]{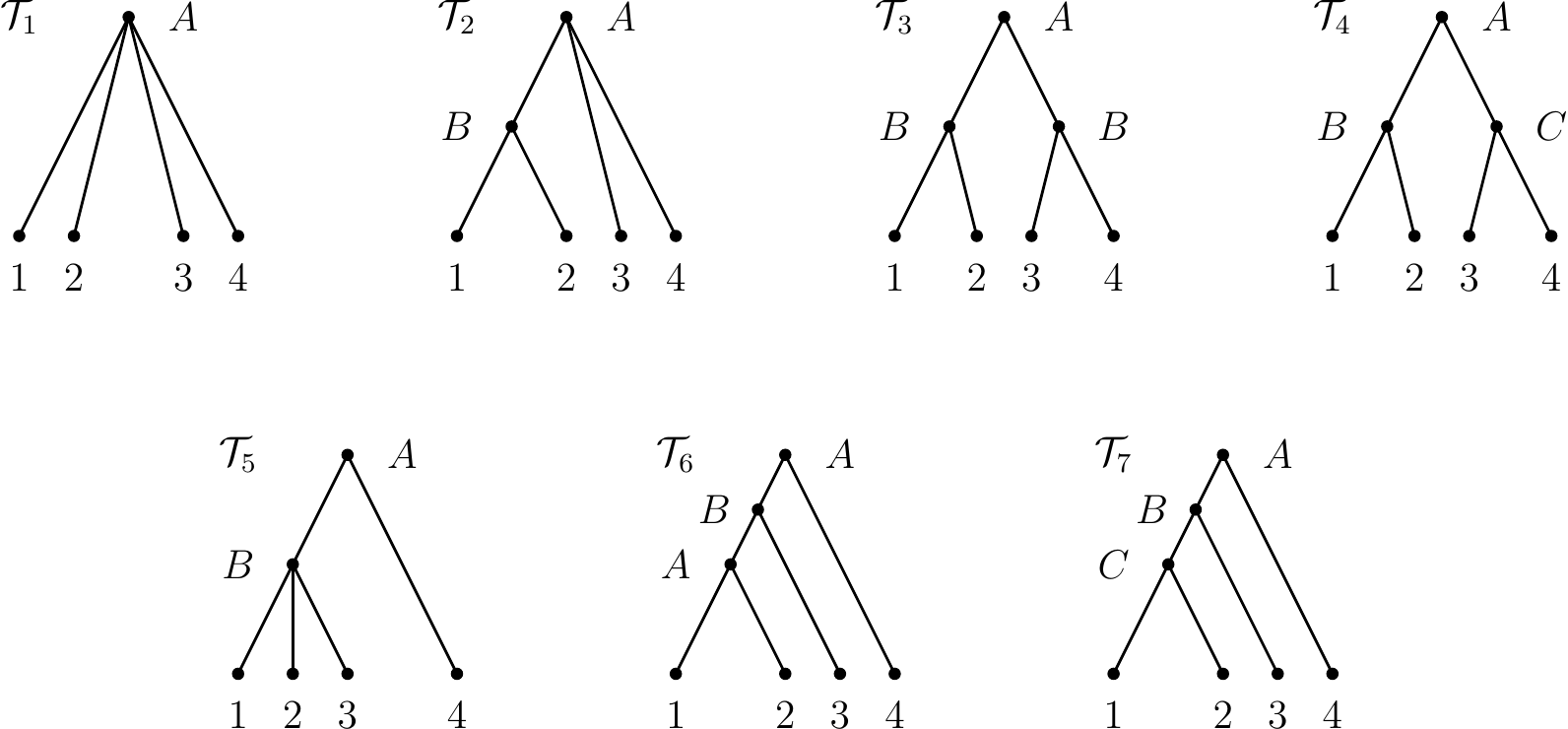}
\caption{All possible discriminating labelled rooted trees $\mathcal T_i$, $1\leq i\leq 7$,  
on $\{1,2,3,4\}$, up to a relabelling of the leaves.}
\label{fsub4}
\end{center}
\end{figure}

\begin{table}[h]
\begin{center}
\begin{tabular}{|c|c|c|c|c|c|c|c|}
\hline
$i$ & $1$ & $2$ & $3$ & $4$ & $5$ & $6$ & $7$ \\
\hline
\hline
$\hat{\delta}_i(1,2,3)$ & 3A & 2A+B & 2A+B & 2A+B & 3B & A+2B & 2B+C \\
\hline
$\hat{\delta}_i(1,2,4)$ & 3A & 2A+B & 2A+B & 2A+B & 2A+B & 3A & 2A+C \\
\hline
$\hat{\delta}_i(1,3,4)$ & 3A & 3A & 2A+B & 2A+C & 2A+B & 2A+B & 2A+B \\
\hline
$\hat{\delta}_i(2,3,4)$ & 3A & 3A & 2A+B & 2A+C & 2A+B & 2A+B & 2A+B \\
\hline
\end{tabular}
\caption{\label{tsub4}{For $1 \leq i \leq 7$ and
		$M=\{A,B,C\}$, the values of the map $\hat{\delta}_i$ represented 
		by the labelled rooted trees  $\mathcal T_i$  in Figure~\ref{fsub4}. 
		The trees $\mathcal T_i$ are given in terms of their index $i$ in the top row.
		}}
\end{center} 
\end{table}

We now  consider symbolic ultrametrics on a set of size four.
For  $M=\{A,B,C\}$, in Figure~\ref{fsub4}, we picture 
all possible discriminating  labelled rooted trees 
$\mathcal T_i$, $1\leq i\leq 7$,  on
$\{1,2,3,4\}$ and in Table~\ref{tsub4},
we list for all $1\leq i\leq 7$ the values of the map
$\hat{\delta_i}:{X \choose 3} \to \mathcal M$ that is 
represented by $\mathcal T_i$.
As we can see from this table, 
all of the maps $\hat{\delta}_i$ except for 
$\hat{\delta}_3$ capture  $\mathcal T_i$
(in the sense that $\mathcal T_i$ is the unique 
labelled rooted tree on $\{1,2,3,4\}$ that represents $\hat{\delta}_i$). 
Now, for $Y\subseteq X $ 
of size four, we say that $\delta|_Y$ 
is \emph{of type} $\hat{\delta}_i$, $i \in \{1, \ldots, 7\}$ if there 
exists a bijection between $Y$ and $\{1,2,3,4\}$ that
induces a bijection 
between the image of $\delta|_Y$ and the image 
of $\hat{\delta}_i$ such that $\delta|_Y$ and $\hat{\delta}_i$ coincide up to 
these bijections.  Since Table~\ref{tsub4} is exhaustive, we have:

\begin{proposition}\label{rep4}
Suppose that $|X|\geq 4 $, that $\delta : {X \choose 3} \to 
\mathcal M$
is a 
three-way symbolic map, and that $Y\subseteq X$ is 
a subset of size four. Then 
$\delta|_Y: {Y\choose 3}\to \mathcal M$ 
is a three-way symbolic ultrametric on $Y$ if and only 
if there exists some $i \in \{1, \ldots, 7\}$ such that $\delta|_Y$ is of type $\hat{\delta}_i$. 
Moreover, if $i \neq 3$, the representation of $\delta|_Y$ is unique.
\end{proposition}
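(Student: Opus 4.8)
The plan is to reduce the statement to a finite case analysis over the seven discriminating labelled rooted trees $\mathcal T_1, \ldots, \mathcal T_7$ on a four-element leaf set, exactly as enumerated in Figure~\ref{fsub4}, together with the corresponding table of induced maps. First I would argue that the enumeration in Figure~\ref{fsub4} is indeed complete: up to relabelling the leaves, there are only two topologies for a rooted phylogenetic tree on four leaves (the ``balanced'' quartet and the ``caterpillar''), and for each topology there are only finitely many ways to assign labels to the internal vertices subject to the discriminating condition (adjacent internal vertices must receive distinct labels). Checking that these possibilities, modulo leaf-relabelling and relabelling of symbols, collapse to exactly the seven trees $\mathcal T_i$ is the combinatorial backbone of the argument; the key point is that it suffices to track how many distinct symbols appear and on which internal vertices, which is why three symbols $\{A,B,C\}$ suffice to witness every case.

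Next I would establish the forward direction. If $\delta|_Y$ is a three-way symbolic ultrametric on $Y$, then by definition there is a labelled rooted tree on $Y$ representing it, and by the remark preceding the proposition (collapsing edges whose endpoints share a label) we may take this representation to be discriminating. By the completeness of the enumeration, this discriminating tree is isomorphic (as a labelled tree, after a leaf-relabelling bijection $Y \to \{1,2,3,4\}$) to one of the $\mathcal T_i$; composing with the induced bijection on symbols shows $\delta|_Y$ is of type $\hat\delta_i$ for that $i$. For the converse, if $\delta|_Y$ is of type $\hat\delta_i$, then pulling back the tree $\mathcal T_i$ along the relabelling bijections yields a labelled rooted tree on $Y$ that represents $\delta|_Y$, so $\delta|_Y$ is a three-way symbolic ultrametric by definition.

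For the uniqueness claim when $i \neq 3$, I would read it off directly from Table~\ref{tsub4}: the seven rows of the table are pairwise distinct \emph{except} that $\hat\delta_3$ is the map for which $\mathcal T_3$ fails to be the unique representation (this is precisely the phenomenon illustrated in Figure~\ref{figsid}, where two distinct discriminating trees induce the same map). So for $i \neq 3$, if $\delta|_Y$ is of type $\hat\delta_i$ and $\mathcal T$, $\mathcal T'$ are two discriminating representations of $\delta|_Y$, then by the forward direction each is of some type, necessarily type $i$ by distinctness of the rows, hence each is isomorphic to $\mathcal T_i$ and therefore to each other. One should double-check that the table entries really are distinct across the rows $i \in \{1,2,4,5,6,7\}$ and that the ``exceptional'' coincidence is genuinely confined to row $3$; this bookkeeping, rather than any conceptual difficulty, is the main obstacle, and it is already essentially carried out in the discussion surrounding Table~\ref{tsub4}.
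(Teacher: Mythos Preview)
Your approach matches the paper's: the proposition is stated there as an immediate consequence of the exhaustiveness of Figure~\ref{fsub4} and Table~\ref{tsub4}, and you are essentially trying to spell out why that exhaustiveness holds and what it buys. However, two points in your write-up are not quite right.

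First, your enumeration of topologies is incomplete. You assert that ``there are only two topologies for a rooted phylogenetic tree on four leaves (the balanced quartet and the caterpillar)''. That is true only for \emph{binary} rooted trees. The paper's phylogenetic trees allow higher-degree internal vertices (the only restriction is no vertex of indegree and outdegree one), so on four leaves there are in fact five shapes: the star (root with four leaf children), the tree with one cherry and two further leaves at the root, the tree whose root has one leaf child and one child with three leaf children, the balanced tree, and the caterpillar. These five shapes, together with the possible discriminating labellings (up to relabelling of symbols), are what produce the seven trees $\mathcal T_1,\ldots,\mathcal T_7$; in particular $\mathcal T_1$ is the star and $\mathcal T_2$ has a non-binary root. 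Your argument as written would miss several of the $\mathcal T_i$.

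Second, your uniqueness argument conflates two different checks. Pairwise distinctness of the seven \emph{rows} of Table~\ref{tsub4} only tells you that the type $i$ is well defined. It does \emph{not} tell you that the representation is unique: two discriminating representations of $\delta|_Y$ could both be ``of type $i$'' via different bijections $Y\to\{1,2,3,4\}$ and yet fail to be isomorphic as labelled trees on $Y$ (isomorphism must fix the leaves). This is exactly what happens for $i=3$: relabelling the leaves of $\mathcal T_3$ so as to swap the two cherries gives a non-isomorphic tree on $\{1,2,3,4\}$ inducing the same map $\hat\delta_3$. What you actually need to verify for each $i\neq 3$ is that the map $\hat\delta_i$ determines the leaf-labelled tree, i.e.\ that no non-trivial permutation of the leaves of $\mathcal T_i$ (outside its automorphism group) yields the same column of values. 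That is the ``captures the tree'' observation the paper makes just before the proposition, and it is a within-row check, not a between-rows check.
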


\begin{table}[h]
\begin{center}
\begin{tabular}{|c|c||c|c|}
\hline
$\delta(1,2,3)$ & 2A+B & $\delta(1,4,5)$ & 2A+B \\
\hline
$\delta(1,2,4)$ & 2A+B & $\delta(2,3,4)$ & 3A \\
\hline
$\delta(1,2,5)$ & 3B & $\delta(2,3,5)$ & 2A+B \\
\hline
$\delta(1,3,4)$ & 3A & $\delta(2,4,5)$ & 2A+B \\
\hline
$\delta(1,3,5)$ & 2A+B & $\delta(3,4,5)$ & A+2B \\
\hline
\end{tabular}
\caption{\label{not4}{For $M=\{A,B\}$ 
		and $X=\{1,2,3,4,5\}$, a 
		three-way symbolic map 
		$\delta : {X \choose 3} \to \mathcal M$
		which is not a three-way symbolic ultrametric on $X$ 
		but whose restriction to any subset 
		$ Y \subset X$ of size four is a three-way 
		symbolic ultrametric on $Y$.}}
\end{center} 
\end{table}

We now  turn our attention to symbolic ultrametrics on
 sets of size five.
In the last result we have seen that a three-way symbolic ultrametric on a set of size 4
may have more than one representation by a labelled tree. 
However, as we shall now show this can not happen for sets of size five.

\begin{lemma}\label{lm5u}
If $Y$ is a set of size five and $\delta : {Y \choose 3} \to \mathcal M$
is a three-way symbolic ultrametric on $Y$, then $\delta$ has a unique discriminating representation. 
\end{lemma}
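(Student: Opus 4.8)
The statement to prove is Lemma~\ref{lm5u}: on a 5-element set $Y$, a three-way symbolic ultrametric $\delta : {Y \choose 3} \to \mathcal M$ has a unique discriminating representation. I already know from Proposition~\ref{rep4} that a four-element restriction can fail to be captured uniquely only when it is of type $\hat\delta_3$ (the "fully balanced" caterpillar-like tree where all four internal-vertex labels that occur are forced but the shape is not). So the natural strategy is: suppose $\mathcal T=(T,t)$ and $\mathcal T'=(T',t')$ are both discriminating representations of $\delta$, and show $\mathcal T\cong\mathcal T'$.

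\emph{Step 1: Reduce to the underlying tree shapes agreeing.} First I would argue that $T$ and $T'$ have the same topology. For each $4$-subset $Y'\subseteq Y$, the induced discriminating trees $\mathcal T_{Y'}$ and $\mathcal T'_{Y'}$ both represent $\delta|_{Y'}$; by Proposition~\ref{rep4} they are isomorphic \emph{unless} $\delta|_{Y'}$ is of type $\hat\delta_3$. Since a phylogenetic tree on five leaves is determined by its collection of induced four-leaf subtrees (the analogue of the fact that quartets determine an unrooted tree; here rooted triples/quartets do the job via BUILD-type reasoning), I would first handle the generic case where at least enough $4$-subsets are \emph{not} of type $\hat\delta_3$ to pin down $T=T'$, and then deal separately with the degenerate case. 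The key sub-step is: if \emph{all} five $4$-subsets of $Y$ are of type $\hat\delta_3$, then analyze directly what $\delta$ looks like — type $\hat\delta_3$ forces $\delta|_{Y'}$ to take only values $3A$-like and $2A{+}B$-like in a very rigid pattern, and I would show this global rigidity still leaves only one tree on five leaves (arguing by hand using Table~\ref{tsub4}'s row for $i=3$). This degenerate case is where I expect the real work to be.

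\emph{Step 2: Recover the labelling once the shape is fixed.} Assuming $T=T'$ as phylogenetic trees on $Y$, I must show $t=t'$. For an internal vertex $v$ of $T$, pick three leaves $x,y,z$ whose pairwise lca's, in some arrangement, realize $v$ as one of the two vertices $\{\mathrm{lca}(x,y),\mathrm{lca}(x,z),\mathrm{lca}(y,z)\}$; then $\delta(x,y,z)$, read as a multiset with at most two distinct symbols (Lemma~\ref{lms2}), together with knowledge of which vertex is "higher," determines $t(v)$. The one delicate point is a vertex whose two relevant lca-labels coincide — but discriminating-ness rules out the parent–child label clash, and on five leaves every internal vertex is separated from the root/its neighbours by enough leaves below the various subtrees to find a distinguishing triple. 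I would make this precise by casework on the (now known) shape of $T$: a caterpillar versus the balanced tree versus trees with a degree-3 internal vertex, checking in each that every internal vertex's label is forced by $\delta$.

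\emph{Main obstacle.} The crux is the all-$\hat\delta_3$ degenerate case in Step 1: $\hat\delta_3$ is exactly the configuration Proposition~\ref{rep4} flags as \emph{not} uniquely representable on four leaves, so I cannot quote that proposition there and must instead exploit the extra fifth leaf. Concretely, if every $4$-subset were of type $\hat\delta_3$ I would derive a strong constraint on $\delta$ (all triples taking essentially two values arranged so that every quartet is "balanced"), and then show that such a $\delta$ is in fact \emph{not} realizable on five leaves at all — i.e. the bad local configurations cannot be glued consistently — or, if it is realizable, that the global tree is nonetheless unique. Either way, the fifth leaf breaks the symmetry that caused non-uniqueness at size four, and pinning down precisely why is the technical heart of the argument. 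The remaining cases and Step 2 are routine casework driven by Table~\ref{tsub4} and the definition of $\delta_{\mathcal T}$.
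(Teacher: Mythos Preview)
Your plan is workable, but it takes a genuinely different and more laborious route than the paper. The paper never tries to recover the tree topology directly. Instead it passes immediately to the induced two-way maps: writing $D_{\mathcal T}$ and $D_{\mathcal T'}$ for the symbolic ultrametrics represented by two discriminating representations $\mathcal T,\mathcal T'$ of $\delta$, by Theorem~\ref{thBD} it suffices to show $D_{\mathcal T}=D_{\mathcal T'}$. The paper then asserts that \emph{some} $4$-subset $Y_0\subseteq Y$ has $\delta|_{Y_0}$ not of type $\hat\delta_3$ (this follows from the hypothesis that $\delta$ is a three-way symbolic ultrametric: a short check over the three binary rooted tree shapes on five leaves shows no discriminating labelling makes every triple evaluate to the same $2A+B$). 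For that single $Y_0$, Proposition~\ref{rep4} gives $D_{\mathcal T}|_{Y_0}=D_{\mathcal T'}|_{Y_0}$, and the four remaining values $D(x_0,x)$ for the fifth element $x_0$ are then read off from the multisets $\delta(x,y,x_0)-D|_{Y_0}(x,y)$ by a short case distinction.

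What this buys: the paper needs only \emph{one} non-$\hat\delta_3$ quartet, and never touches tree shapes or label-recovery casework. Your approach instead tries to reconstruct $T$ from its induced $4$-leaf subtrees and then pin down $t$ vertex by vertex; this can be made to work, but your Step~1 is vague about the ``middle ground'' where some but not all quartets are of type $\hat\delta_3$ (you only sketch the extremes), and your ``main obstacle''---the all-$\hat\delta_3$ case---is in fact vacuous. The paper's reduction to the two-way map $D$ is the idea you are missing; it replaces all of your topological and labelling casework with a single appeal to B\"ocker--Dress uniqueness.
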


\begin{proof}[Proof]
Suppose that $\delta$ is a three-way 
symbolic ultrametric on $Y$,
and that $\mathcal T$ is a discriminating representation of $\delta$. 
Let $D=D_{\mathcal T}: {Y \choose 2} \to M$ 
be the symbolic ultrametric represented by $\mathcal T$. 
By Theorem~\ref{thBD},
it suffices to show that if $\delta$ is also represented by a labelled tree $\mathcal T'$,
then $D_{\mathcal T'} = D_{\mathcal T}$. 

Since $\delta$ is a three-way symbolic ultrametric on $Y$,
there exists a subset $Y_0$ of $Y$ with $|Y_0|=4$
such that $\delta|_{Y_0}$ is not of type $\hat{\delta}_3$. 
Thus, by Proposition~\ref{rep4}, $D_{\mathcal T'}|_{Y_0} = D_{\mathcal T}|_{Y_0}$. 
Hence, $D_{\mathcal T'}(x_0,x) = D_{\mathcal T}(x_0,x)$ for all
$x \in Y_0$ where $x_0$ is the unique element contained in $Y-Y_0$, since
the value of  $D_{\mathcal T'}(x_0,x)$ 
is given by  $\delta$ 
and $D_{\mathcal T}|_{Y_0}$ as follows. Let $Y_0=\{x,y,z,u\}$ and consider the multisets $\delta(x,y,x_0)-D_{\mathcal T}|_{Y_0}(x,y)$,
$\delta(x,z,x_0)-D_{\mathcal T}|_{Y_0}(x,z)$ and $\delta(x,u,x_0)-D_{\mathcal T}|_{Y_0}(x,u)$ where for a multiset $A$ with $k\geq 1$ copies of some
element $a$, we denote by $A-a$ the multiset obtained by
 removing one copy of $a$. 
If there exists a unique element $c \in M$ that belongs to
all three of these sets, we have $D_{\mathcal T'}(x_0,x)=c$. If two distinct elements of $M$ share this property, this implies $D_{\mathcal T'}(x_0,y)=D_{\mathcal T'}(x_0,z)=D_{\mathcal T'}(x_0,u) \neq D_{\mathcal T'}(x_0,x)$. We then have $D_{\mathcal T'}(x_0,y)=m(\delta(y,z,x_0))$, and $D_{\mathcal T'}(x_0,x)$ is the single element of $\delta(x,y,x_0)-\{D_{\mathcal T}|_{Y_0}(x,y),D_{\mathcal T'}(x_0,y)\}$.
\end{proof}

Note that, as the example in Table~\ref{not4} shows, it is not true in general 
that a three-way symbolic map $\delta$ that restricts to a three-way symbolic 
ultrametric on all subsets $Y$ of $X$ 
of size four is a three-way symbolic ultrametric on $X$.  However, 
as mentioned above, using the previous lemma we now show 
that considering sets of
size five is enough to ensure that this is the case.

\begin{theorem}\label{5enough}
Suppose that $|X|\geq 5 $ and that  
$\delta : {X \choose 3} \to \mathcal M$
is a three-way symbolic map. Then, $\delta$ is a 
three-way symbolic ultrametric if and only if $\delta|_Y$ is a  
three-way symbolic ultrametric for all $Y \subseteq X$ of 
size five.
\end{theorem}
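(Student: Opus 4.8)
The plan is to prove the theorem by induction on $|X|$, with the base case $|X|=5$ being trivial. The forward implication is immediate from the discussion preceding the statement: restrictions of a three-way symbolic ultrametric to subsets of size at least four are again three-way symbolic ultrametrics. So the work is entirely in the converse. Assume $|X|=n\geq 6$ and that $\delta|_Y$ is a three-way symbolic ultrametric for every five-element $Y\subseteq X$; we must build a labelled rooted tree on $X$ representing $\delta$.

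First I would fix any element $x_0\in X$ and set $X'=X-\{x_0\}$. By the inductive hypothesis applied to $\delta|_{X'}$ (whose every five-element restriction is a three-way symbolic ultrametric), there is a discriminating labelled rooted tree $\mathcal T'=(T',t')$ on $X'$ representing $\delta|_{X'}$; let $D'=D_{\mathcal T'}:{X'\choose 2}\to M$ be the two-way symbolic ultrametric it represents. The idea is to reconstruct, from $\delta$ and $D'$, the two-way symbolic ultrametric $D$ on all of $X$ that a putative representation of $\delta$ would induce, and then invoke Theorem~\ref{thBD}. On pairs within $X'$ we simply set $D=D'$. To define $D(x_0,x)$ for $x\in X'$, I would reuse exactly the local computation from the proof of Lemma~\ref{lm5u}: pick any three further elements $y,z,u\in X'$ (possible since $n\geq 6$ gives $|X'|\geq 5$), look at the multisets $\delta(x,y,x_0)-D'(x,y)$, $\delta(x,z,x_0)-D'(x,z)$, $\delta(x,u,x_0)-D'(x,u)$, and read off $D(x_0,x)$ as the common element (or resolve the two-common-element case exactly as in that lemma). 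Lemma~\ref{lm5u} applied to the five-element set $\{x,y,z,u,x_0\}$ guarantees this value is well-defined and independent of the choice of $\{y,z,u\}$, since on that set $\delta$ has a unique discriminating representation, whose induced two-way map restricts to $D'$ on $\{x,y,z,u\}$ and thus must agree with this recipe.

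Next I would verify that $D:{X\choose 2}\to M$ so defined is a symbolic ultrametric, i.e. satisfies (U1) and (U2). Both conditions only involve three or four elements at a time, so it suffices to check them on subsets $S\subseteq X$ with $|S|\leq 4$. If $x_0\notin S$ this is inherited from $D'$. If $x_0\in S$, then $|S\cup\{y,z\}|\leq 6$ for suitable auxiliary elements, so I can pass to a five-element subset $Z$ containing $S$ and the auxiliary elements used to define the relevant values of $D(x_0,\cdot)$; on $Z$, $\delta|_Z$ is a three-way symbolic ultrametric with unique discriminating representation $\mathcal T_Z$ whose induced two-way map $D_{\mathcal T_Z}$ agrees with $D$ on all pairs in $Z$ (again by the uniqueness in Lemma~\ref{lm5u} together with agreement on $Z\cap X'$). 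Since $D_{\mathcal T_Z}$ is a genuine symbolic ultrametric, (U1) and (U2) hold on $S$. Hence $D$ is a symbolic ultrametric, and by Theorem~\ref{thBD} there is a discriminating labelled rooted tree $\mathcal T=(T,t)$ on $X$ with $D_{\mathcal T}=D$.

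Finally I would check that $\mathcal T$ actually represents $\delta$, i.e. that for every $\{x,y,z\}\in{X\choose 3}$ we have $\delta(x,y,z)=\{t(\mathrm{lca}_T(x,y)),t(\mathrm{lca}_T(x,z)),t(\mathrm{lca}_T(y,z))\}$. The right-hand side is $\{D(x,y),D(x,z),D(y,z)\}$. If $x_0\notin\{x,y,z\}$ this is $\{D'(x,y),D'(x,z),D'(y,z)\}=\delta(x,y,z)$ since $\mathcal T'$ represents $\delta|_{X'}$. If $x_0\in\{x,y,z\}$, say $z=x_0$, then I want $\delta(x,y,x_0)=\{D(x,y),D(x_0,x),D(x_0,y)\}$; here I would again embed $\{x,y,x_0\}$ together with the auxiliary elements used to compute $D(x_0,x)$ and $D(x_0,y)$ into a five-element set $Z$, note that $\delta|_Z$ equals $(\delta_{\mathcal T})|_Z$ on all triples in $Z\cap X'$ and on enough triples through $x_0$ to force, via the uniqueness of Lemma~\ref{lm5u}, that $\mathcal T_Z$ and $\mathcal T|_Z$ induce the same two-way map and hence the same three-way map; then $\delta(x,y,x_0)=\delta_{\mathcal T_Z}(x,y,x_0)=\{D(x,y),D(x_0,x),D(x_0,y)\}$. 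This gives $\delta=\delta_{\mathcal T}$.

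The main obstacle I expect is the bookkeeping in the well-definedness of $D(x_0,x)$ and in showing $D$ is consistent with $\delta$ on triples through $x_0$: one must make sure that the value extracted by the Lemma~\ref{lm5u}-recipe does not depend on which auxiliary triple $\{y,z,u\}\subseteq X'$ is chosen, and that the "two-common-element" branch of the recipe is handled coherently across different choices. The key leverage for all of this is that every relevant configuration sits inside some five-element subset, where Lemma~\ref{lm5u} supplies both existence and uniqueness of a discriminating representation, so that all the locally computed pieces are restrictions of a single genuine labelled tree. Once that local-to-global coherence is in place, the assembly via Theorem~\ref{thBD} is routine.
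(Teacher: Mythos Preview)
Your inductive strategy is a legitimate alternative to the paper's direct global-consistency argument, and once the well-definedness of $D(x_0,x)$ is genuinely secured the remaining steps go through essentially as you sketch. But there is a real gap at precisely the step you label ``bookkeeping'': the assertion that on $Z=\{x,y,z,u,x_0\}$ the induced two-way map $D_Z$ ``restricts to $D'$ on $\{x,y,z,u\}$'' is not a formality, and it is not true in general. Both $D_Z|_{\{x,y,z,u\}}$ and $D'|_{\{x,y,z,u\}}$ are two-way symbolic ultrametrics giving rise to $\delta|_{\{x,y,z,u\}}$; by Proposition~\ref{rep4} this determines them uniquely \emph{except} when $\delta|_{\{x,y,z,u\}}$ is of type $\hat{\delta}_3$, in which case there are three distinct such maps and nothing in your argument forces the two restrictions to agree. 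Your closing heuristic---``every relevant configuration sits inside some five-element subset, where Lemma~\ref{lm5u} supplies uniqueness''---is exactly where the reasoning breaks: uniqueness on each $5$-set does not by itself yield consistency between two $5$-sets whose $4$-element intersection is of type $\hat{\delta}_3$. Consequently the value you extract for $D(x_0,x)$ may depend on the auxiliary triple $\{y,z,u\}$, and the later verifications (of (U1)/(U2) and of $\delta=\delta_{\mathcal T}$) inherit the same gap, since they too rely on $D_Z|_{Z\cap X'}=D'|_{Z\cap X'}$.

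This is precisely the obstacle the paper's own proof confronts. The paper does not use induction; it directly shows that the family $\{D_Y\}_{|Y|=5}$ is globally consistent by assuming two $5$-sets $Y_1,Y_2$ disagree on some pair, reducing to $|Y_1\cap Y_2|=4$, and then arguing that one can always rechoose $Y_1,Y_2$ so that $\delta|_{Y_1\cap Y_2}$ is \emph{not} of type $\hat{\delta}_3$, whereupon Proposition~\ref{rep4} gives a contradiction. Your inductive packaging is fine, but it does not sidestep this $\hat{\delta}_3$-avoidance argument; you still need it (or an equivalent) to establish that $D_Z|_{\{x,y,z,u\}}=D'|_{\{x,y,z,u\}}$ for a suitable choice of auxiliaries.
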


\begin{proof}[Proof]
The fact that a three-way symbolic ultrametric on $X$ restricts to such an ultrametric
on all subsets of $X$ of size five is clear.

Conversely, assume that $\delta|_Y$ is a three-way symbolic 
ultrametric for all $Y \subseteq X$ of size five. 
For such a set $Y$, we denote by 
$\mathcal T_Y=(T_Y,t_Y)$ the unique 
(by Lemma~\ref{lm5u}) 
discriminating labelled tree that 
represents $\delta|_Y$, and by $D_Y$ the symbolic 
ultrametric that is represented by $\mathcal T_Y$.

Clearly, if there exists a map $D : {X \choose 2} \to M$ such that, 
for all subsets $Y \subseteq X$ of size five, the restriction of $D$ to ${Y \choose 2}$ 
coincides with $D_Y$, then $D$ satisfies $\delta(x,y,z)=\{D(x,y),D(x,z),D(y,z)\}$, for all
$x,y,z\in X$ pairwise distinct. 
Moreover, since $D_Y$ is a symbolic ultrametric on any subset $Y \subseteq X$ of size five, 
and given that the property of being a symbolic ultrametric is based on a 4-point condition, 
we have that such a map $D$, if it exists, is also a symbolic ultrametric. Thus, if $D$ 
exists, then $\delta$ is a three-way symbolic
 ultrametric.

To show that $D$ exists, assume for contradiction that there 
exist $x$ and $y$ in $X$ and two distinct subsets $Y_1$ and $Y_2$ of $X$ 
of size five, both containing $x$ and $y$, such that $D_{Y_1}(x,y) \neq D_{Y_2}(x,y)$. 
We may assume without loss of generality that $I=Y_1 \cap Y_2$ has size four. 
Moreover, we claim that $x$, $y$, $Y_1$ and $Y_2$ can be chosen in such a way that $\delta|_I$ 
is not of type $\hat{\delta_3}$, as defined in Table~\ref{tsub4}.

To prove this claim, consider the case where 
$\delta|_I$ is of type $\hat{\delta}_3$ 
(otherwise, the claim trivially holds). 
Assume $Y_1=\{x,y,z,t,u_1\}$ and $Y_2=\{x,y,z,t,u_2\}$, which 
implies $I=\{x,y,z,t\}$. Both the subtree of 
$\mathcal T_{Y_1}$ 
induced by $I$ and the subtree of $\mathcal T_{Y_2}$ induced by $I$ are of the 
form $\mathcal T_3$ in Figure~\ref{fsub4}, and their underlying phylogenetic trees are not isomorphic. We 
can assume that one has cherries  $\{x,y\}$ and $\{t,z\}$ and the 
other has cherries  $\{x,z\}$ and $\{t,y\}$. 
Then, we have not only that  $D_{Y_1}(x,y) \neq D_{Y_2}(x,y)$, but also that  
$D_{Y_1}(x,z) \neq D_{Y_2}(x,z)$,  $D_{Y_1}(z,t) \neq D_{Y_2}(z,t)$, and  
$D_{Y_1}(y,t) \neq D_{Y_2}(y,t)$. Moreover, it is easy to check that there 
exists a subset $Y^* \subset I$ of size three such that neither 
$\delta|_{Y^* \cup \{u_1\}}$ nor 
$\delta|_{Y^* \cup \{u_2\}}$ 
is of type $\hat{\delta_3}$.

Since $Y^*$ is a subset of $I$ of size three and, 
in view of the 
four inequalities listed above, there exists two elements $x',y' \in Y^*$ 
such that $D_{Y_1}(x',y') \neq D_{Y_2}(x',y')$. If we denote by $Y'$ the set 
$Y^* \cup \{u_1\} \cup \{u_2\}$, we have that both $Y' \cap Y_1$ 
and $Y' \cap Y_2$ have size four, and that at least one of 
$D_{Y'}(x',y') \neq D_{Y_1}(x',y')$ or $D_{Y'}(x',y') \neq D_{Y_2}(x',y')$ holds.
If the first inequality holds, the claim is then satisfied for $x',y', Y'$ and $Y_1$. 
Otherwise, it is satisfied for $x',y', Y'$ and $Y_1$, which completes the proof
of the claim.

Now, in light of the claim, 
the representation $\mathcal T_I$ of $\delta|_I$ is unique, and so 
is the symbolic ultrametric $D^I$ that is represented by $\mathcal T_I$. Moreover, $D^I$ is precisely 
the restriction of $D_{Y_1}$ to $I$, and the restriction of $D_{Y_2}$ to $I$. 
In particular, we have $D(x,y)=D_{Y_1}(x,y)$ and $D(x,y)=D_{Y_2}(x,y)$, which
contradicts $D_{Y_1}(x,y) \neq D_{Y_2}(x,y)$. 
\end{proof}

\section{A five-point characterization of three-way symbolic ultrametrics}\label{sec:five}

We now focus on using the results in the previous two
 sections to derive conditions
for characterizing  three-way symbolic ultrametrics that are 
analogous to conditions (U1) and (U2) for symbolic ultrametrics.

In the following, we shall consider expressions of the form $\sum_{m\in M} \alpha_m m$,
where $\alpha_m$ is a real number,  which arise 
when we take linear combinations of 
multisets in $\mathcal M$. We shall say that such an expression $\sum_{m\in M} \alpha_m m$  is 
\emph{valid for $M$} if the coefficient for each element in $M$ is 
contained in $\mathbb N$. For example, for $M=\{a,b\}$, 
if $S_1=2a+b$, $S_2=2b+a$ and $S_3=3a$ are multisets in $\mathcal M$, then 
we have $\frac{1}{3}(S_1+S_2)=a+b$, which  
is valid for $M$, but $S_3-S_1=a-b$ and $\frac{1}{2}(S_1+S_3)=\frac{5}{2}a+\frac{1}{2}b$
which are not valid for $M$. 

Now, suppose that $\delta: {X \choose 3} \to \mathcal M$
is a three-way symbolic map where $|X|\geq 5$.
Let $Y=\{x,y,z,u,v\}$ be a subset of $X$. Let $\nu_Y(\delta)$ 
denote the vector
\[(\delta(x,y,z),\delta(x,y,u),\ldots,\delta(z,u,v)).\]
In addition, suppose that $D_Y : {Y \choose 2} \to M$ is a map such that 
$$
\delta(a,b,c)=\{D_Y(a,b),D_Y(a,c),D_Y(b,c)\}
$$ 
for all $a,b,c \in Y$, and let $\mu_Y(\delta)$ denote the vector
\[(D_Y(x,y), D_Y(x,z), \ldots, D_Y(u,v)).\]

\noindent By definition of $D_Y$, it is 
straight-forward to check that $A \mu_Y(\delta)=\nu_Y(\delta)$, where

\[A=\begin{pmatrix}
1 & 1 & 0 & 0 & 1 & 0 & 0 & 0 & 0 & 0 \\
1 & 0 & 1 & 0 & 0 & 1 & 0 & 0 & 0 & 0 \\
1 & 0 & 0 & 1 & 0 & 0 & 1 & 0 & 0 & 0 \\
0 & 1 & 1 & 0 & 0 & 0 & 0 & 1 & 0 & 0 \\
0 & 1 & 0 & 1 & 0 & 0 & 0 & 0 & 1 & 0 \\
0 & 0 & 1 & 1 & 0 & 0 & 0 & 0 & 0 & 1 \\
0 & 0 & 0 & 0 & 1 & 1 & 0 & 1 & 0 & 0 \\
0 & 0 & 0 & 0 & 1 & 0 & 1 & 0 & 1 & 0 \\
0 & 0 & 0 & 0 & 0 & 1 & 1 & 0 & 0 & 1 \\
0 & 0 & 0 & 0 & 0 & 0 & 0 & 1 & 1 & 1 \\
\end{pmatrix}.\]

\noindent Note that in \cite{HHMS12} it was shown that
the matrix $A$ is invertible with inverse

\[A^{-1}=\frac{1}{6}\begin{pmatrix}
2 & 2 & 2 & -1 & -1 & -1 & -1 & -1 & -1 & 2 \\
2 & -1 & -1 & 2 & 2 & -1 & -1 & -1 & 2 & -1 \\
-1 & 2 & -1 & 2 & -1 & 2 & -1 & 2 & -1 & -1 \\
-1 & -1 & 2 & -1 & 2 & 2 & 2 & -1 & -1 & -1 \\
2 & -1 & -1 & -1 & -1 & 2 & 2 & 2 & -1 & -1 \\
-1 & 2 & -1 & -1 & 2 & -1 & 2 & -1 & 2 & -1 \\
-1 & -1 & 2 & 2 & -1 & -1 & -1 & 2 & 2 & -1 \\
-1 & -1 & 2 & 2 & -1 & -1 & 2 & -1 & -1 & 2 \\
-1 & 2 & -1 & -1 & 2 & -1 & -1 & 2 & -1 & 2 \\
2 & -1 & -1 & -1 & -1 & 2 & -1 & -1 & 2 & 2 \\
\end{pmatrix}.\]

Consider the product $\mu_Y(\delta)=A^{-1}\nu_Y(\delta)$. Then,
as the rows of $A^{-1}$ are indexed by pairs of distinct elements in $Y$,
it is straight-forward to check by considering 
the $\{p,q\}$th row of $A^{-1}$ (for $p \neq q \in Y$)
and
putting $\{e,f,g\} =Y-\{p,q\}$ and 
\begin{small}
\[
S_{p,q}^Y(\delta) = \frac{1}{6}(2(\delta(p,q,e)+\delta(p,q,f)+\delta(p,q,g)+\delta(e,f,g))-
\displaystyle\sum_{a,b \in Y-\{p,q\}}(\delta(p,a,b)+\delta(q,a,b))),
\]
\end{small}

\noindent that $ S_{p,q}^Y = \{ D_Y(p,q) \}$.
Defining $S_{p,q}^Y$ as above for $Y \subseteq X$ 
with $|Y|=5$ and $p\neq q \in Y$ we also have:

\begin{proposition}\label{prtodist}
Suppose that $|X|\geq 5$, that
$\delta : {X \choose 3} \to \mathcal M$
is a three-way symbolic map, and that $Y\subseteq X$ has size five. 
There exists a map $D^Y : {Y \choose 2} \to M$ such that 
$\delta(a,b,c)=\{D^Y(a,b),D^Y(a,c),D^Y(b,c)\}$ for all $a,b,c \in Y$ if and only 
if for all $p,q \in Y$ distinct, $S_{p,q}^Y(\delta)$ 
is valid for $M$, in which case $S_{p,q}^Y(\delta)$ is a singleton multiset.
\end{proposition}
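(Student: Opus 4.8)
The plan is to prove the equivalence by exploiting the invertibility of the matrix $A$ recorded above, treating everything at the level of formal $\mathbb Z$-linear (or $\mathbb Q$-linear) combinations of the symbols in $M$. Observe that any multiset $\{a,b,c\}\in\mathcal M$ may be identified with the valid-for-$M$ expression $a+b+c\in\bigoplus_{m\in M}\mathbb N\, m$, so the vector $\nu_Y(\delta)$ lives in the free $\mathbb N$-module on $M$ (componentwise), and the operations $S_{p,q}^Y(\delta)$ are exactly the components of $A^{-1}\nu_Y(\delta)$, which a priori lie only in the free $\mathbb Q$-module on $M$.

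First, for the forward direction I would assume a map $D^Y$ with $\delta(a,b,c)=\{D^Y(a,b),D^Y(a,c),D^Y(b,c)\}$ exists. Then, writing $\mu_Y(\delta)$ for the vector of the $D^Y(p,q)$ viewed as single symbols (equivalently, as valid expressions $1\cdot D^Y(p,q)$), the defining relations give $A\mu_Y(\delta)=\nu_Y(\delta)$ by direct inspection of $A$ (each row of $A$ picks out the three pairs inside a triple). Multiplying by $A^{-1}$ and reading off the $\{p,q\}$th row yields $S_{p,q}^Y(\delta)=\mu_Y(\delta)_{p,q}=\{D^Y(p,q)\}$, which is a singleton multiset and in particular valid for $M$. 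This direction is essentially the calculation already sketched in the paragraph preceding the proposition; I would just record it cleanly.

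For the converse, suppose $S_{p,q}^Y(\delta)$ is valid for $M$ for every distinct $p,q\in Y$. Validity means $S_{p,q}^Y(\delta)=\sum_{m\in M}\alpha_m^{(p,q)}m$ with all $\alpha_m^{(p,q)}\in\mathbb N$. Summing the identity $A\bigl(A^{-1}\nu_Y(\delta)\bigr)=\nu_Y(\delta)$ and comparing total coefficient sums (each component of $\nu_Y(\delta)$ is a size-3 multiset, hence has coefficient sum $3$, and each row of $A$ has exactly three $1$'s, so $A$ applied to a vector whose components have coefficient sums $c_{p,q}$ produces components with sums $\sum$ of the three relevant $c_{p,q}$) forces, via the explicit structure of $A$, that every $c_{p,q}:=\sum_m\alpha_m^{(p,q)}=1$; hence each $S_{p,q}^Y(\delta)$ is a singleton multiset $\{D^Y(p,q)\}$ for a well-defined symbol $D^Y(p,q)\in M$. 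Setting $\mu_Y(\delta)$ to be the resulting vector, we have $\mu_Y(\delta)=A^{-1}\nu_Y(\delta)$, hence $A\mu_Y(\delta)=\nu_Y(\delta)$, which unpacks to $\{D^Y(a,b),D^Y(a,c),D^Y(b,c)\}=\delta(a,b,c)$ for all triples in $Y$, as required.

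The main obstacle is the bookkeeping step in the converse: from "each $S_{p,q}^Y(\delta)$ is valid" one must deduce "each is in fact a \emph{singleton}". The coefficient-sum argument above is the natural route — the key point being that $\mathbf 1^{\mathsf T}A=3\cdot\mathbf 1^{\mathsf T}$ and $\mathbf 1^{\mathsf T}A^{-1}=\tfrac13\mathbf 1^{\mathsf T}$ (so applying $A^{-1}$ divides total coefficient sums by $3$), which pins the total mass of each $S_{p,q}^Y(\delta)$ to $1$; combined with nonnegativity of the coefficients this yields a singleton. I would double-check this identity directly from the displayed $A^{-1}$ (each of its rows sums to $\tfrac16\cdot 6\cdot\tfrac12$... more precisely each row of $6A^{-1}$ has entries summing to $3$), and then the rest is the routine unpacking of $A\mu_Y(\delta)=\nu_Y(\delta)$ into the stated multiset equalities.
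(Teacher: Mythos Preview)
Your argument is correct and follows essentially the same route as the paper: both directions hinge on $A\mu_Y(\delta)=\nu_Y(\delta)$, and for the converse both you and the paper deduce that each $S_{p,q}^Y(\delta)$ is a singleton by a total-mass count (the paper writes $S_{p,q}^Y(\delta)=\tfrac16(\mathcal A-\mathcal B)$ with $|\mathcal A|=24$, $|\mathcal B|=18$, which is your row-sum identity unpacked for a single row).

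Two small slips to clean up. First, the identity you want is $A\mathbf 1=3\mathbf 1$ (row sums of $A$), giving $A^{-1}\mathbf 1=\tfrac13\mathbf 1$; your version $\mathbf 1^{\mathsf T}A=3\mathbf 1^{\mathsf T}$ concerns column sums, which happens to also be true here but is not what governs the coefficient sums of $A^{-1}\nu_Y(\delta)$. Second, each row of $6A^{-1}$ has four entries equal to $2$ and six equal to $-1$, so its row sum is $2$, not $3$; this is consistent with $A^{-1}\mathbf 1=\tfrac13\mathbf 1$ and with the mass of each $S_{p,q}^Y(\delta)$ being $3\cdot\tfrac13=1$.
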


\begin{proof}[Proof]
Suppose first that the map $D^Y$ exists. 
Without loss of generality we may assume that $D^Y=D_Y$.
In view of
the discussion preceding the proposition, 
it follows that $S_{p,q}^Y(\delta)$ is valid for $M$ 
for all $p,q \in Y$ distinct, as 
$S^Y_{p,q}(\delta) = \{D_Y(p,q)\}$.

To see the converse, assume that $S_{p,q}^Y(\delta)$ 
is valid for $M$ for all $p \neq q\in Y$. Fix $p$ and $q$. 
We claim that $S^Y_{p,q}(\delta)$ is a singleton multiset. 
To see this, put 
$\mathcal A= 2(\delta(p,q,e)+\delta(p,q,f)+\delta(p,q,g)+\delta(e,f,g))$
and 
$\mathcal B=\sum_{a,b \in Y-\{p,q\}}(\delta(p,a,b)+\delta(q,a,b))$. 
Then since $S^Y_{p,q}(\delta)$ is valid for $M$, every element
in $\mathcal B$ must also be an element in $\mathcal A$.
Hence, $S^Y_{p,q}(\delta)$ must contain 
$\frac{1}{6}|\mathcal A-\mathcal B| =1$ element
as $|\mathcal A|=24$ and $|\mathcal B|=18$. 
This proves the claim. 

Now, it is straight forward to see that if $S^Y_{p,q}(\delta)= \{s^Y_{p,q}\}$, for $p \neq q \in Y$,
then the  map $D^Y:{Y\choose 2}\to M$  defined  by putting $D^Y(p,q)=s^Y_{p,q}(\delta)$,
for all $p \neq q\in Y $,  satisfies the stated  property.
%
%
\end{proof}

We now present conditions for characterizing when 
a three-way symbolic map is a three-way symbolic ultrametric. 
For $\Sigma \in \mathcal M$,
we define the 
elements $m(\Sigma)$ and $n(\Sigma)$ of $M$ as follows:
\begin{itemize}
\item If $\Sigma$ contains a single element $A \in M$ repeated three 
times, we put  $m(\Sigma)=n(\Sigma)=A$.
\item If $\Sigma$ contains two distinct elements, we define $m(\Sigma)$ 
as the element of $\Sigma$ appearing twice and $n(\Sigma)$ as the element appearing only once.
\item If $\Sigma$ contains three distinct elements, we put $m(\Sigma)=n(\Sigma)=\emptyset$.
\end{itemize}
Note that if $\Sigma$ contains two or fewer distinct elements, 
then $\Sigma=\{m(\Sigma),m(\Sigma),n(\Sigma)\}$.

\begin{theorem}\label{theo:P1-P3}
Suppose that $|X|\geq 5$ and that
$\delta : {X \choose 3} \to \mathcal M$
is a three-way 
symbolic map. Then $\delta$ is a three-way symbolic ultrametric 
if and only if the following hold:
\begin{enumerate}
\item[(P1)] For all subsets $Y\subseteq X$ of size
five and all $x,y\in Y$ distinct, $S_{x,y}^Y(\delta)$ is valid for $M$.
\item[(P2)] For all pairwise distinct $x,y,z \in X$, $\delta(x,y,z)$ contains at most two distinct elements.
\item[(P3)] For all pairwise distinct $x,y,z,u \in X$ with $\delta(x,y,z)=\delta(y,z,u) \neq \delta(x,y,u)=\delta(x,z,u)$ holding, we have $m(\delta(x,y,z))=m(\delta(x,y,u))$.
\end{enumerate}
\end{theorem}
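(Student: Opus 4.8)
The plan is to deduce the result from Theorem~\ref{5enough}, which reduces the problem to subsets of size five, and then to check, for such a subset $Y$, that conditions (P1)--(P3) restricted to $Y$ are exactly what is needed for $\delta|_Y$ to be a three-way symbolic ultrametric on $Y$. So the heart of the argument is a five-point statement: for $|Y| = 5$, $\delta|_Y$ is a three-way symbolic ultrametric on $Y$ if and only if (P1) holds for $Y$, (P2) holds for all triples in $Y$, and (P3) holds for all quadruples in $Y$.

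For the forward direction, suppose $\delta$ is a three-way symbolic ultrametric, represented by a labelled rooted tree $\mathcal{T} = (T,t)$, and let $D = D_{\mathcal{T}}$ be the symbolic ultrametric it induces on pairs. Then $\delta(a,b,c) = \{D(a,b),D(a,c),D(b,c)\}$ for all triples, so for any $Y$ of size five the map $D|_Y$ plays the role of $D^Y$ in Proposition~\ref{prtodist}; hence $S^Y_{x,y}(\delta) = \{D(x,y)\}$ is valid for $M$, giving (P1). Condition (P2) is exactly Lemma~\ref{lms2}. For (P3), I would translate the hypothesis $\delta(x,y,z) = \delta(y,z,u) \neq \delta(x,y,u) = \delta(x,z,u)$ into statements about the vertices $\mathrm{lca}_T$ of the various pairs; the point is that $m(\delta(x,y,z))$ is the label of the ``deeper'' of the two lca-vertices arising from the triple, i.e.\ the label of the lca of the two elements whose lca lies strictly below the common lca of all three, and a short case analysis on the shape of $T_{\{x,y,z,u\}}$ (using Figure~\ref{fsub4} / Table~\ref{tsub4}, excluding type $\hat\delta_3$ which cannot satisfy the stated inequality pattern) shows $m(\delta(x,y,z)) = m(\delta(x,y,u))$.

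For the converse, assume (P1)--(P3). Fix $Y \subseteq X$ of size five. By (P1) and Proposition~\ref{prtodist} there is a map $D^Y : \binom{Y}{2} \to M$ with $\delta(a,b,c) = \{D^Y(a,b),D^Y(a,c),D^Y(b,c)\}$ for all $a,b,c \in Y$. To conclude via Theorem~\ref{thBD} that $\delta|_Y$ is a three-way symbolic ultrametric on $Y$, it suffices to show $D^Y$ is a symbolic ultrametric, i.e.\ satisfies (U1) and (U2). Property (U1) for $D^Y$ on a triple $\{x,y,z\}$ is immediate: the three values $D^Y(x,y), D^Y(x,z), D^Y(y,z)$ are exactly the elements of the multiset $\delta(x,y,z)$, which by (P2) has at most two distinct elements, so two of them coincide. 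For (U2), suppose for contradiction there are pairwise distinct $x,y,z,u \in Y$ with $D^Y(x,y) = D^Y(y,z) = D^Y(z,u) \neq D^Y(z,x) = D^Y(x,u) = D^Y(u,y)$; then, reading off the multisets, $\delta(x,y,z) = \{2D^Y(x,y) + D^Y(x,z)\}$ has $m = D^Y(x,y)$, and similarly $\delta(y,z,u)$ has $m = D^Y(z,u) = D^Y(x,y)$, while $\delta(x,y,u)$ and $\delta(x,z,u)$ both equal $\{2D^Y(x,u) + D^Y(x,y)\}$ — wait, here one must be careful about which element is doubled; I would write $a = D^Y(x,y)$ and $b = D^Y(x,u)$ with $a \neq b$ and compute each of the four relevant multisets explicitly, verifying they satisfy the hypothesis of (P3) (the equal/unequal pattern on $\{x,y,z,u\}$) and then deriving from the conclusion of (P3) an equality of the doubled elements that forces $a = b$, the desired contradiction. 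Having shown each $D^Y$ is a symbolic ultrametric, Theorem~\ref{thBD} gives that $\delta|_Y$ is a three-way symbolic ultrametric on $Y$, and since this holds for all five-element $Y$, Theorem~\ref{5enough} yields that $\delta$ is a three-way symbolic ultrametric on $X$.

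The main obstacle I expect is the bookkeeping in the (P3)-related steps in both directions: correctly identifying, from the tree shape, which of the (up to) two distinct symbols in a triple's multiset is the one returned by $m(\cdot)$, and then matching the pattern of equalities among the four multisets on a quadruple against the precise hypothesis ``$\delta(x,y,z) = \delta(y,z,u) \neq \delta(x,y,u) = \delta(x,z,u)$'' of (P3). This requires care about the role of $n(\cdot)$ as well — in the (U2) contradiction the ``$n$'' values all coincide (they all equal the common value $a$ in my notation), so the content of (P3) is genuinely about the ``$m$'' values — and a clean way to organize it is probably to first record, as a small sublemma, how $\delta$ restricted to a quadruple determines (and is determined by) the shape and labels of the induced discriminating tree $\mathcal{T}_Y$ when it is not of type $\hat\delta_3$, using Table~\ref{tsub4}. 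Everything else is routine linear-algebraic or case-checking work.
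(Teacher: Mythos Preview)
Your approach is essentially identical to the paper's: both directions pass through Theorem~\ref{5enough}, Proposition~\ref{prtodist}, and Theorem~\ref{thBD}, with (P2) supplying (U1) and (P3) ruling out the (U2) configuration on each five-element subset. One caution for your bookkeeping: $m(\delta(x,y,z))$ is the label of the \emph{higher} of the two lca-vertices (the one that occurs twice), not the deeper one, so in the (U2) contradiction with $a=D^Y(x,y)$, $b=D^Y(x,u)$ you obtain $\delta(x,y,z)=\delta(y,z,u)=2a+b$ and $\delta(x,y,u)=\delta(x,z,u)=a+2b$, whence $m(\delta(x,y,z))=a\neq b=m(\delta(x,y,u))$, directly contradicting (P3).
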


\begin{proof}[Proof]
Assume first that $\delta$ is a three-way symbolic 
ultrametric. By Theorem~\ref{5enough} and Proposition~\ref{prtodist} it follows that 
Properties  (P1) and (P2) must hold. 
To see that Property (P3) holds too let $\{x,y,z,u \}\in 
{X\choose 4} $ 
be such that $\delta(x,y,z)=\delta(y,z,u) \neq
 \delta(x,y,u)=\delta(x,z,u)$. 
Since $\delta|_{\{x,y,z,u\}}$ is a three-way symbolic ultrametric,
Proposition~\ref{rep4} combined with Table~\ref{tsub4}
implies that 
$\delta|_{\{x,y,z,u\}}$ is either of type
$\hat{\delta}_3$ and $\hat{\delta}_5$. 
Clearly, $m(\hat{\delta}_i(x,y,z))=
m(\hat{\delta}_i(x,y,u))$
holds for $i=3,5$ and, so, Property (P3) follows.

Conversely, assume that $\delta$ satisfies Properties 
(P1) -- (P3). 
Consider a subset $Y \subseteq X$ of size five. By Proposition~\ref{prtodist}, there exists a map 
$D^Y : {Y \choose 2} \to M$ such that $\delta(x,y,z)=\{D^Y(x,y),D^Y(x,z),D^Y(y,z)\}$ for all $x,y,z \in Y$. 
We claim that $D^Y$ is a symbolic ultrametric.
For this it suffices to show that $D^Y$ satisfies 
Property~(U2) as 
Property ~(U1) is a direct consequence of Property~(P1).

To see that $D^Y$ satisfies Property~(U2), assume for
contradiction that there exist pairwise distinct
$x,y,z,u \in Y$ such
that $D^Y(x,y)=D^Y(y,z)=D^Y(z,u) \neq D^Y(z,x)=D^Y(x,u)=D^Y(u,y)$. 
Put $A=D^Y(x,y)$ and $B=D^Y(z,x)$. Then 
$\delta(x,y,z)=\delta(y,z,u)=2A+B \neq A+2B= \delta(x,y,u)=\delta(x,z,u)$. Since, 
$m(\delta(x,y,z))=A\neq B= m(\delta(x,y,u))$ also holds 
 this is impossible in view of Property~(P3). Thus, 
 $D^Y$ also satisfies Property~(U2) and, so, is a 
 symbolic ultrametric, as claimed.

Since $D^Y$ is a symbolic ultrametric, there exists 
a labelled rooted tree $\mathcal T$ that represents 
$D^Y$. Combined with the definition of $D^Y$
it follows that $\mathcal T$ also represents $\delta|_Y$.
Thus, $\delta|_Y$ is a three-way symbolic ultrametric 
and, so, $\delta|_Y$ is a three-way symbolic ultrametric
for all subsets $Y \subseteq X$ with $|Y|=5$.
 By Theorem~\ref{5enough}, it follows that 
$\delta$ is a three-way symbolic ultrametric.
\end{proof}

Note that Properties~(P1) -- (P3) are independent 
of each other. Indeed, that Property~(P2) is independent 
of Properties~(P1) and (P3) and that Property~(P3) is independent of Properties~(P1)
and (P2) is a direct consequence of the fact that
Properties~(U1) and (U2) are independent of each other.

To see that Property~(P1) is independent of 
Properties~(P2) 
and (P3), consider the  three-way symbolic map 
$\delta : {X \choose 3} \to \mathcal M_{\{A,B\}}$
defined, for all $x,y,z \in X$, by putting
$\delta(x,y,z)=2A+B$. The 
map $\delta$ always satisfies (P2) and (P3), but 
if $|X| \geq 5$, $\delta$ does not satisfy (P1).

\section{Reconstructing three-way symbolic ultrametric 
representations using triplets}\label{sec:triples}

In this section we are interested in 
determining when a three-way symbolic map $\delta$ on $X$
is a tree-map or  a symbolic ultrametric.
Clearly, using the conditions given in Theorem~\ref{theo:P1-P3}
this can be done by examining every subset of $X$ with size five. 
However, we now show how to do this using a triplet-based approach, 
which essentially reduces the problem to considering subsets of $X$ 
of size three. 

Recall that a {\em triplet} is a binary phylogenetic tree on three leaves. By $xy|z$ we denote
the triplet with leaf-set $\{x,y,z\}$ which has $x,y$ adjacent to the same vertex in the tree.
For $T$ a phylogenetic tree on $X$ and $x,y,z \in X$, we say that $T$ \emph{displays} 
the triplet $xy|z$ if 
$\mathrm{lca}_T(x,z)=\mathrm{lca}_T(y,z) \neq \mathrm{lca}_T(x,y)$. To keep notation at bay,
we sometimes also say that a labelled
tree $\mathcal T=(T,t)$ on $X$ {\em displays} a triplet $r$
if $r$ is displayed by $T$.

In \cite[Section 7.6]{SS03} a triplet-based approach is described
for deciding whether or not a two-way symbolic map $\delta$ is a symbolic 
ultrametric or not and, if it is, for building a 
labelled tree which represents $\delta$.
This approach is based on the BUILD algorithm, that was presented under that name in \cite[p.407]{A81}. 
Using the results in Section~\ref{sec:tree}, the
BUILD algorithm also allows us to 
check if a three-way symbolic map
is a tree-map using triplets as follows. 
Suppose $\delta:{X\choose 3} \to \mathcal M$ 
is a three-way symbolic map.
Pick any $r \in X$. Then, using the BUILD-based approach, 
we can check whether or not the map $\delta_r$  defined 
in Section~\ref{sec:tree} is
a symbolic ultrametric by taking the set 
of triplets $xy|z$ with $x,y,z\in X$ distinct, for 
which 
$\delta_r(x,y) \neq \delta_r(x,z) = \delta_r(y,z)$ holds
as input to BUILD. 
If this is not the case, then by Lemma~\ref{lmddr}, 
$\delta$ is not a three-way symbolic tree-map. Otherwise, if $(T,t)$
is the representation of $\delta_r$ returned by BUILD, 
then we can simply check whether or
not this leads to a representation of $\delta$ 
by attaching the
leaf $r$ to the root of $T$. If this is possible then 
$\delta$ is a three-way symbolic tree-map, otherwise 
it is not. 

We now turn our attention to three-way symbolic ultrametrics. We begin by presenting a key
link between triplets and such maps whose proof is straight
forward. Denote the underlying set of a multiset $\mathcal A$ 
by $\underline{\mathcal A}$.

\begin{proposition}\label{prop:prtrip}
	Let $\mathcal T=(T,t)$ be a discriminating labelled tree. 
	For $x,y,z \in X$ distinct:
	\begin{enumerate}
		\item[(T1)] If $xy|z$ is a triplet displayed by $T$, then $t(\mathrm{lca}_T(x,z))=t(\mathrm{lca}_T(y,z))=
		m(\delta_{\mathcal T}(x,y,z))$ and $t(\mathrm{lca}_T(x,y))=n(\delta_{\mathcal T}(x,y,z))$
		\item[(T2)] If $T$ does not display any triplet on $\{x,y,z\}$, then $|\underline{\delta_{\mathcal T}(x,y,z)}|=1$.
	\end{enumerate}
\end{proposition}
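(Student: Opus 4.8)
The plan is to unwind the definitions of $m$, $n$, the multiset $\delta_{\mathcal T}(x,y,z)$, and the predicate ``$T$ displays $xy|z$'' in terms of least common ancestors, and then do a short case analysis on the relative positions of $\mathrm{lca}_T(x,y)$, $\mathrm{lca}_T(x,z)$ and $\mathrm{lca}_T(y,z)$ in the rooted tree $T$. Recall that for three distinct leaves $x,y,z$ the three pairwise least common ancestors lie on the path from the root to $\mathrm{lca}_T(x,y,z)$-type configuration, and in fact two of these three vertices always coincide: namely, if $v$ is the ``lowest'' of the three, then the other two are equal. So exactly one of the following holds: (a) all three coincide, or (b) two coincide and the third lies strictly below them. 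This dichotomy is precisely the dichotomy between ``$T$ displays no triplet on $\{x,y,z\}$'' (case (a)) and ``$T$ displays a triplet on $\{x,y,z\}$'' (case (b)), using the given definition that $T$ displays $xy|z$ iff $\mathrm{lca}_T(x,z)=\mathrm{lca}_T(y,z)\neq\mathrm{lca}_T(x,y)$.

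For (T1), suppose $xy|z$ is displayed, so $\mathrm{lca}_T(x,z)=\mathrm{lca}_T(y,z)=:w$ while $\mathrm{lca}_T(x,y)=:v\neq w$, with $v$ strictly below $w$. Then by definition $\delta_{\mathcal T}(x,y,z)=\{t(v),t(w),t(w)\}=2\,t(w)+t(v)$ as an element of $\mathcal M$. The only point that requires care is that $t(v)\neq t(w)$, i.e. that this multiset genuinely has two distinct elements so that $m$ and $n$ are defined by the ``two distinct elements'' clause rather than the ``single element'' clause. This is where discriminating-ness of $\mathcal T$ is used: since $v$ lies strictly below $w$, the path from $w$ to $v$ is a nonempty sequence of edges, and along a discriminating labelled tree the label strictly changes across every edge; hence $t(v)\neq t(w)$. (If $w$ and $v$ are not adjacent one needs only that the endpoints of a path of length $\geq 1$ can be made to have distinct labels after collapsing, but since $\mathcal T$ is assumed discriminating this is immediate for adjacent vertices, and for a longer path one still concludes $t(v)\neq t(w)$ is \emph{possible} to violate — so actually the cleanest route is to note that in a discriminating tree consecutive vertices differ, but non-consecutive ones may agree; hence I should instead argue directly that $\delta_{\mathcal T}$ still records the correct multiset and invoke that $w\neq v$ as vertices, which is what the displayed-triplet hypothesis gives, and that $t$ restricted to the two relevant vertices yields a two-element multiset because otherwise the triplet would collapse). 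Given $t(v)\neq t(w)$, the definition of $m$ gives $m(\delta_{\mathcal T}(x,y,z))=t(w)=t(\mathrm{lca}_T(x,z))=t(\mathrm{lca}_T(y,z))$ and $n(\delta_{\mathcal T}(x,y,z))=t(v)=t(\mathrm{lca}_T(x,y))$, which is exactly (T1).

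For (T2), suppose $T$ displays no triplet on $\{x,y,z\}$. By the dichotomy above this forces $\mathrm{lca}_T(x,y)=\mathrm{lca}_T(x,z)=\mathrm{lca}_T(y,z)=:w$, so $\delta_{\mathcal T}(x,y,z)=\{t(w),t(w),t(w)\}$, whose underlying set $\underline{\delta_{\mathcal T}(x,y,z)}$ is $\{t(w)\}$, of size $1$.

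\textbf{Main obstacle.} The only genuinely delicate point is the claim in (T1) that the multiset has exactly two distinct elements, i.e. $t(\mathrm{lca}_T(x,y))\neq t(\mathrm{lca}_T(x,z))$. One must be careful that ``discriminating'' only guarantees distinct labels across a \emph{single} edge, not along an arbitrary path; so the argument should either reduce to the case where $\mathrm{lca}_T(x,y)$ is the child of $\mathrm{lca}_T(x,z)$ along the relevant path (using that in the spanning/induced subtree on $\{x,y,z\}$ these two become adjacent and the induced labelled tree on the triple is still discriminating after collapsing), or simply note that the statement of (T1) is really a statement about $m$ and $n$ as \emph{defined}, so that if the two vertices happened to carry the same label the conclusion $m=n=t(w)$ would still read correctly. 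I would phrase the write-up to sidestep the issue by working in the discriminating tree $\mathcal T_{\{x,y,z\}}$ induced on the triple, where the two distinct lca-vertices are adjacent and hence carry distinct labels by discriminating-ness, and then transport the equality back to $\mathcal T$ via the fact that inducing preserves the relevant lca-labels. Everything else is routine unwinding of definitions.
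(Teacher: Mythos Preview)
Your approach is correct and is exactly the kind of direct unwinding of definitions the paper has in mind; note that the paper does not actually supply a proof for this proposition, merely calling it ``straight forward''.

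One remark: the ``main obstacle'' you flag is not in fact an obstacle, and your detour through the induced tree $\mathcal T_{\{x,y,z\}}$ is unnecessary. With $v=\mathrm{lca}_T(x,y)$ and $w=\mathrm{lca}_T(x,z)=\mathrm{lca}_T(y,z)$, the multiset is $\{t(v),t(w),t(w)\}$ regardless. If $t(v)\neq t(w)$ then $m=t(w)$, $n=t(v)$ by the ``two distinct elements'' clause; if $t(v)=t(w)$ then the multiset is $3\,t(w)$ and the ``single element'' clause gives $m=n=t(w)=t(v)$. In \emph{both} cases the conclusions $t(\mathrm{lca}_T(x,z))=t(\mathrm{lca}_T(y,z))=m(\delta_{\mathcal T}(x,y,z))$ and $t(\mathrm{lca}_T(x,y))=n(\delta_{\mathcal T}(x,y,z))$ hold verbatim. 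You already noticed this option in passing; it is the clean way to write the argument, and it shows that the discriminating hypothesis is not actually used in the proof of either (T1) or (T2) as stated (it is there because it is needed where the proposition is applied, e.g.\ to make the reconstruction in the subsequent corollary well-defined).
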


\begin{corollary}\label{correcov}
	Up to isomorphism, any labelled tree $\mathcal T$
	can be uniquely reconstructed from $\delta_{\mathcal T}$
	and the set of triplets displayed by $\mathcal T$.
\end{corollary}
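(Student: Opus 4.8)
\textbf{Proof proposal for Corollary~\ref{correcov}.}
The plan is to reconstruct $\mathcal T=(T,t)$ from the data $\bigl(\delta_{\mathcal T},\ \mathcal R(\mathcal T)\bigr)$, where $\mathcal R(\mathcal T)$ denotes the set of triplets displayed by $\mathcal T$, in two stages: first recover the unlabelled phylogenetic tree $T$, then recover the labelling map $t$. For the first stage I would feed $\mathcal R(\mathcal T)$ to the BUILD algorithm. Since $\mathcal R(\mathcal T)$ is by definition the set of all triplets displayed by the discriminating phylogenetic tree $T$, this set is consistent and moreover identifies $T$ up to isomorphism: it is a classical fact (cf.\ \cite[Section 7.6]{SS03}) that a phylogenetic tree is determined, up to isomorphism, by the set of triplets it displays, provided that set is ``complete enough'', and the full set of displayed triplets certainly is. So $T$ is recovered uniquely up to isomorphism that is the identity on $X$.

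For the second stage I would recover the value $t(v)$ for each internal vertex $v$ of $T$ using Proposition~\ref{prop:prtrip}. Every internal vertex $v$ of a phylogenetic tree equals $\mathrm{lca}_T(x,y)$ for some pair $x,y$ of leaves (for instance, pick two leaves below distinct children of $v$). For such a pair, choose any third leaf $z$ not below $v$; then $xy|z$ is displayed by $T$, so $v=\mathrm{lca}_T(x,y)$ and, by (T1), $t(v)=t(\mathrm{lca}_T(x,y))=n(\delta_{\mathcal T}(x,y,z))$. If no such $z$ exists, then $v=\rho_T$ and every leaf is below $v$; in that case, for leaves $x,y$ below distinct children of $v$ and any third leaf $z$, we have $\mathrm{lca}_T(x,z)=\mathrm{lca}_T(y,z)=\rho_T=v\neq\mathrm{lca}_T(x,y)$ only if $z$ is below neither child containing $x$ nor that containing $y$; more simply, if $v=\rho_T$ has a child $w$ with exactly one leaf $z$ below it, then $xz|y$ or $yz|x$ is displayed and $v$ appears as the ``$m$'' vertex, giving $t(v)=m(\delta_{\mathcal T}(\cdot,\cdot,\cdot))$ for the appropriate triple. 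In all cases, knowing which triplets are displayed tells us, for each internal vertex, a triple of leaves realizing it together with whether it is the $m$-vertex or the $n$-vertex, and then $\delta_{\mathcal T}$ supplies the symbol. Since the reconstructed $T$ is discriminating and its displayed triplets are exactly $\mathcal R(\mathcal T)$, the assignment is consistent, and it clearly agrees with the original $t$; uniqueness of $T$ up to isomorphism then yields uniqueness of $\mathcal T$ up to isomorphism.

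The main obstacle I anticipate is the bookkeeping around the root and, more generally, around internal vertices all of whose leaf-descendants lie ``on one side'', i.e.\ situations where (T2) rather than (T1) is the relevant case for some triples on a vertex's descendant leaves: one must be careful that every internal vertex is realized as the critical ($m$- or $n$-) vertex of at least one \emph{displayed} triplet, so that Proposition~\ref{prop:prtrip}(T1) can be invoked rather than (T2). This is where the assumption that $\mathcal R(\mathcal T)$ is the \emph{full} set of displayed triplets (not merely a generating subset) is used: it guarantees that for each internal $v$ we have access to all triples of the form $\{x,y,z\}$ with $x,y$ below distinct children of $v$, and at least one of these has its third element $z$ positioned so that $v$ is the apex of a displayed triplet. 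Once this is checked the rest is routine, and the proof is genuinely short — as the statement in the excerpt already signals by calling it a corollary with a ``straight forward'' proof.
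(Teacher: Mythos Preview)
Your approach is correct but differs from the paper's. You reconstruct $T$ first from the triplet set (via BUILD and the fact that a rooted phylogenetic tree is determined by its full set of displayed triplets), and then recover $t$ vertex by vertex using Proposition~\ref{prop:prtrip}. The paper instead bypasses the tree entirely at first: from $\delta_{\mathcal T}$ and $\mathcal R(\mathcal T)$ it defines a two-way map $D_\delta:{X\choose 2}\to M$ directly, by setting $D_\delta(x,y)=n(\delta_{\mathcal T}(x,y,z))$ if $xy|z\in\mathcal R(\mathcal T)$, $D_\delta(x,y)=m(\delta_{\mathcal T}(x,y,z))$ if $xz|y\in\mathcal R(\mathcal T)$, and $D_\delta(x,y)$ equal to the single element of $\delta_{\mathcal T}(x,y,z)$ if no triplet on $\{x,y,z\}$ is displayed. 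Proposition~\ref{prop:prtrip} shows this is well-defined and equals $D_{\mathcal T}$, and then Theorem~\ref{thBD} gives uniqueness of $\mathcal T$ in one stroke. The paper's route is shorter and more uniform: it needs no appeal to the fact that triplet sets determine phylogenetic trees, no invocation of BUILD, and no separate bookkeeping for the root (which, as you yourself flag, is the messy part of your argument). Your route has the minor advantage of making the reconstruction of $T$ explicit and independent of the B\"ocker--Dress machinery, but at the cost of the case analysis you anticipate.
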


\begin{proof}[Proof]
	Put $\mathcal T=(T,t)$ and $\delta=\delta_{\mathcal T}$. Let $t:V^o(T)\to M$
	and let $\mathcal R$ denote the set of triplets displayed by $\mathcal T$.
	Define a map $D_{\delta}:{X\choose 2}\to M$ as follows.
	Suppose $x,y\in X$ distinct.
	If there exists some $z\in X-\{x,y\}$ 
	such that no triplet on $\{x,y,z\}$ is
	contained in $\mathcal R$, then define $D_{\delta}(x,y)$ 
	to be the element in $\delta_{\mathcal T}(x,y,z)$. If
	there exists some $z\in X-\{x,y\}$ 
	such that $xy|z\in \mathcal R$
	then put $D_{\delta}(x,y)=n(\delta_{\mathcal T}(x,y,z))$
	and if $xz|y\in\mathcal R$ then
	put $D_{\delta}(x,y)=m(\delta_{\mathcal T}(x,y,z))$.
	In view of Proposition~\ref{prop:prtrip}, 
	the map $D_{\delta}$ is clearly well-defined. 
	
	The corollary now follows in view of Theorem~\ref{thBD} as $D_{\delta}$ 
	is equal to the symbolic ultrametric $D_{\mathcal T}$ 
	that is represented by $\mathcal T$ (as 
	$D_{\delta}(x,y)=t(lca(x,y))=D_{\mathcal T}(x,y)$ clearly holds 
	for all $x,y\in X$ distinct).
%
\end{proof}

In light of Corollary~\ref{correcov}, it
is of interest to understand when, for a labelled tree $\mathcal T$,
the set of triplets displayed by $\mathcal T$ can be obtained from $\delta_{\mathcal T}$. 
The tree $\mathcal T_3$ in Figure~\ref{fsub4}, 
suggests that this is not always possible. In fact, as we shall 
show, it suffices to exclude a special type of 
labelled tree which we define next. 

A {\em fixed-cherry tree on $X$ (with cherry $\{x_1,x_2\}$)}, $|X|\geq 4$,  is a labelled 
tree $\mathcal T=(T,t)$ on $X$ 
such that the root $\rho_T$ of $T$ has 
two children $v$ and $w$
with $t(v)=t(w)\neq t(\rho_T)$, $v$ is the 
parent of two elements $x_1$ and $x_2$ of $X$,
and $w$ the parent of all elements in $X-\{x_1,x_2\}$. 
For example, the  tree $\mathcal T_3$ in 
Figure~\ref{fsub4} is a fixed-cherry tree on 
$X=\{1,2,3,4\}$ with cherry $\{1,2\}$. 
Note that if $\mathcal T=(T,t)$  is a fixed-cherry tree with cherry $\{x_1,x_2\}$ and 
$x,y,z\in X$ distinct, then $\delta_{\mathcal T}(x,y,z)=\{t(w),t(w),t(w)\}$ 
if neither $x_1$ nor $x_2$ belong to $\{x,y,z\}$ 
and $\delta(x,y,z)=\{t(\rho_T),t(\rho_T), t(w)\}$ else. 
We call a  three-way symbolic map $\delta:{X\choose 3}\to \mathcal M$ 
that satisfies these conditions for some  $x_1 \neq x_2 \in X$ 
a {\em fixed cherry map (with cherry $\{x_1,x_2\}$)}. 
The following observation is straight-forward to check.

\begin{lemma}\label{oct}
	Suppose that $|X|\geq 5$ and that
	$\delta$ is a three-way symbolic map on $X$. Then $\delta$ can be represented by a 
	fixed-cherry tree on $X$
	with cherry $\{x_1,x_2\}$  if and only if $\delta$ is a fixed-cherry 
	map with cherry $\{x_1,x_2\}$ .
\end{lemma}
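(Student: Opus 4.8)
The plan is to prove both directions of the equivalence essentially by unwinding the definitions, since ``fixed-cherry tree'' and ``fixed-cherry map'' were tailored to match. For the forward direction, suppose $\delta$ is represented by a fixed-cherry tree $\mathcal T=(T,t)$ on $X$ with cherry $\{x_1,x_2\}$, with children $v,w$ of $\rho_T$ as in the definition. I would compute $\delta_{\mathcal T}(x,y,z)=\{t(\mathrm{lca}_T(x,y)),t(\mathrm{lca}_T(x,z)),t(\mathrm{lca}_T(y,z))\}$ for each triple $\{x,y,z\}\in\binom{X}{3}$ by a short case analysis: if neither $x_1$ nor $x_2$ lies in $\{x,y,z\}$, then all three leaves sit below $w$, and since $w$ is the parent of every element of $X-\{x_1,x_2\}$ all three pairwise lca's equal $w$, giving $\{t(w),t(w),t(w)\}$; if exactly one of $x_1,x_2$ (say $x_1$) lies in $\{x,y,z\}$, then the lca of $x_1$ with either of the other two leaves is $\rho_T$ while the lca of the two leaves below $w$ is $w$, giving $\{t(\rho_T),t(\rho_T),t(w)\}$; and the case where both $x_1,x_2\in\{x,y,z\}$ cannot contribute a third pattern because $|\{x,y,z\}|=3$ forces a third leaf below $w$, again yielding $\{t(\rho_T),t(\rho_T),t(w)\}$ (here one uses $t(v)=t(w)$, so $\mathrm{lca}_T(x_1,x_2)=v$ contributes $t(w)$). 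Since $t(v)=t(w)\neq t(\rho_T)$, the two output multisets are genuinely as required, so $\delta=\delta_{\mathcal T}$ is a fixed-cherry map with cherry $\{x_1,x_2\}$.

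For the converse, suppose $\delta:\binom{X}{3}\to\mathcal M$ is a fixed-cherry map with cherry $\{x_1,x_2\}$, i.e. there are symbols $a\neq b$ in $M$ (playing the roles of $t(\rho_T)$ and $t(w)=t(v)$) such that $\delta(x,y,z)=3b$ when $\{x,y,z\}\cap\{x_1,x_2\}=\emptyset$ and $\delta(x,y,z)=2a+b$ otherwise. I would then explicitly build the candidate tree $\mathcal T=(T,t)$: let $\rho_T$ have children $v$ and $w$, let $v$ be the parent of the leaves $x_1,x_2$, let $w$ be the parent of the remaining $|X|-2\geq 3$ leaves, and set $t(\rho_T)=a$, $t(v)=t(w)=b$. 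This is a bona fide labelled rooted phylogenetic tree on $X$ (the hypothesis $|X|\geq 5$ guarantees $w$ has at least three children so there are no suppressible degree-two vertices, and in particular $\mathcal T$ is a legitimate fixed-cherry tree). Running the computation from the forward direction on this $\mathcal T$ shows $\delta_{\mathcal T}$ equals $\delta$ on every triple, so $\mathcal T$ represents $\delta$, completing the equivalence.

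I do not expect a serious obstacle here; the lemma is stated as ``straight-forward to check'' and the only things to be careful about are (i) covering the case $\{x_1,x_2\}\subseteq\{x,y,z\}$ in the case analysis, using $t(v)=t(w)$ so that it collapses into the ``$2a+b$'' pattern rather than producing a spurious third value, and (ii) checking that the constructed $T$ in the converse is a valid phylogenetic tree, which is exactly where the hypothesis $|X|\geq 5$ is used (so that $w$ is not a degree-two vertex). One remark worth recording in the write-up is that $a\neq b$ is forced: if a three-way symbolic map had $3b$ on triples avoiding $\{x_1,x_2\}$ and $2b+b=3b$ on the others it would be constant, and constant maps are represented by a single-vertex-labelled star, which is not a fixed-cherry tree — but since the definition of fixed-cherry map bakes in the two distinct output multisets $\{t(w),t(w),t(w)\}$ and $\{t(\rho_T),t(\rho_T),t(w)\}$ together with the requirement $t(\rho_T)\neq t(w)$ inherited from the fixed-cherry tree definition, this is automatic and needs no separate argument.
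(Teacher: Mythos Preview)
Your proof is correct and is precisely the direct verification the paper has in mind; the paper itself gives no proof, stating only that the observation is ``straight-forward to check.'' One minor inessential point: your aside that $|X|\geq 5$ is what prevents $w$ from being a suppressible degree-two vertex is not quite the right attribution, since already with $|X|\geq 4$ the vertex $w$ has outdegree at least two; the hypothesis $|X|\geq 5$ is rather what guarantees that some triple actually avoids $\{x_1,x_2\}$, so that both multiset values genuinely occur and the notion of fixed-cherry map is non-degenerate.
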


Note that a triplet $xy|z$ with  
$x,y,z \in X$ is displayed by a fixed-cherry tree on $X$  with 
cherry $\{x_1,x_2\}$ if and 
only if either $\{x,y\}=\{x_1,x_2\}$ or $z \in \{x_1,x_2\}$,
and $ x,y \in X-\{x_1,x_2\}$ hold. 
In particular, if $|X| > 4$ and $\delta$ is a fixed-cherry map,
then the cherry can be easily identified from $\delta$, and 
therefore also all of the triplets displayed by $\mathcal T$.

We now consider how to obtain the triplets displayed
by a labelled tree
$\mathcal T$ in case $\mathcal T$ is not a fixed-cherry tree.
We start with a useful lemma.
Suppose $\mathcal T = (T,t)$ is a labelled tree and $Y \subseteq X$, $|Y|\ge 4$, is such that 
$\delta_{\mathcal T}|_Y$ has a
unique discriminating representation. 
Then we denote that representation by
 $\mathcal T_Y=(T_Y, t_Y)$. 
 
\begin{lemma}\label{lmsubset}
	Let $\mathcal T$ be a discriminating labelled tree
	 on $X$ and assume that $Y\subseteq X$ is such that
	 $\delta_{\mathcal T}|_Y$ has a unique discriminating 
	 representation. 
	If $t$ is a triplet displayed by $\mathcal T_Y$, 
	then $t$ is displayed by $\mathcal T$. 
\end{lemma}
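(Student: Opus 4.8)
The plan is to show that a triplet displayed by $\mathcal T_Y$ is not created by the restriction operation, i.e.\ it already corresponds to a genuine nesting of least common ancestors in $\mathcal T$. Let $xy|z$ be a triplet displayed by $\mathcal T_Y$, so that $x,y,z\in Y$ and $\mathrm{lca}_{T_Y}(x,z)=\mathrm{lca}_{T_Y}(y,z)\neq \mathrm{lca}_{T_Y}(x,y)$. First I would recall that $\mathcal T_Y$ is, by definition, the unique discriminating representation of $\delta_{\mathcal T}|_Y$, and that the subtree $\mathcal T|_Y$ of $\mathcal T$ induced by $Y$ (with edges whose endpoints carry equal labels collapsed) is \emph{a} discriminating representation of $\delta_{\mathcal T}|_Y$; by the uniqueness hypothesis these two labelled trees coincide up to isomorphism. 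Hence it suffices to argue that if $xy|z$ is displayed by $\mathcal T|_Y$ then $xy|z$ is displayed by $\mathcal T$ itself.

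The key step is the observation that induced subtrees and edge-collapsing both preserve the displayed-triplet relation in the "forward" direction. Concretely, for $x,y,z\in Y$, the tree $T_Y$ spanned by $Y$ displays $xy|z$ iff $T$ displays $xy|z$, since $\mathrm{lca}_{T_Y}(a,b)$ is obtained from $\mathrm{lca}_T(a,b)$ by suppressing degree-two vertices, and suppression does not change which of the three pairwise lca's are nested — the combinatorial shape of the rooted tree on $\{x,y,z\}$ is unchanged. Collapsing an edge both of whose endpoints have the same $t$-label can only identify two lca's that were already in an ancestor–descendant relation; it cannot turn the "cherry" pair $\{x,y\}$ into a non-cherry or vice versa, because collapsing is a contraction and $\mathrm{lca}_T(x,y)$ lies (weakly) below $\mathrm{lca}_T(x,z)=\mathrm{lca}_T(y,z)$ whenever $xy|z$ holds in the uncollapsed tree, so after contraction we still have $\mathrm{lca}(x,z)=\mathrm{lca}(y,z)$, and these can only fail to be distinct from $\mathrm{lca}(x,y)$ if the edge(s) separating them get collapsed — but those edges have distinct-labelled endpoints precisely when the triplet survives, which is the situation in the \emph{discriminating} representation $\mathcal T_Y$. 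So from "$xy|z$ displayed by $\mathcal T_Y$" one reads back "$xy|z$ displayed by $\mathcal T|_Y$ before collapsing", i.e.\ by $T_Y$ spanned by $Y$, hence by $T$.

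The main obstacle I anticipate is bookkeeping the two independent operations — passing to the spanned subtree $T_Y$ and then collapsing monochromatic edges — and making sure the equality $\mathrm{lca}_{T_Y}(x,z)=\mathrm{lca}_{T_Y}(y,z)$ together with $\mathrm{lca}_{T_Y}(x,y)\neq \mathrm{lca}_{T_Y}(x,z)$ pulls back correctly through \emph{both}. A clean way to handle this is to avoid reasoning about collapsing directly: instead, use Proposition~\ref{prop:prtrip}. Since $\mathcal T_Y$ is discriminating and displays $xy|z$, Proposition~\ref{prop:prtrip}(T1) gives $t_Y(\mathrm{lca}_{T_Y}(x,z))=m(\delta_{\mathcal T}(x,y,z))\neq n(\delta_{\mathcal T}(x,y,z))=t_Y(\mathrm{lca}_{T_Y}(x,y))$, so in particular $|\underline{\delta_{\mathcal T}(x,y,z)}|=2$. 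Then by Proposition~\ref{prop:prtrip}(T2) (contrapositive) $\mathcal T$ must display \emph{some} triplet on $\{x,y,z\}$; and by Proposition~\ref{prop:prtrip}(T1) applied to $\mathcal T$, whichever triplet $\mathcal T$ displays on $\{x,y,z\}$ determines $m$ and $n$ of $\delta_{\mathcal T}(x,y,z)$ in a way consistent only with $xy|z$ — because $\delta_{\mathcal T}|_Y(x,y,z)=\delta_{\mathcal T}(x,y,z)$, the pair receiving the label $n(\delta_{\mathcal T}(x,y,z))$ is forced to be $\{x,y\}$ in both trees. Hence $\mathcal T$ displays $xy|z$, as required.
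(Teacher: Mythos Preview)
Your first two paragraphs reproduce the paper's argument: identify $\mathcal T_Y$ with the tree obtained from $\mathcal T$ by restricting to $Y$ and then collapsing monochromatic edges (using the uniqueness hypothesis), and then observe that neither operation can create a new triplet. That part is correct and matches the paper, which cites \cite[Theorem~6.4.1]{SS03} for the restriction step and notes that collapsing only removes triplets.

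The alternative route you propose in the third paragraph, via Proposition~\ref{prop:prtrip}, does not work. The inference ``$|\underline{\delta_{\mathcal T}(x,y,z)}|=2$'' from ``$\mathcal T_Y$ displays $xy|z$'' is invalid: (T1) only asserts that the two lca labels equal $m(\delta)$ and $n(\delta)$, not that these differ. If $\mathrm{lca}_{T_Y}(x,y)$ is not a \emph{child} of $\mathrm{lca}_{T_Y}(x,z)$ in $T_Y$ (possible once $|Y|\ge 4$), then even in a discriminating tree the two vertices may carry the same label, giving $|\underline{\delta(x,y,z)}|=1$. Worse, even when $|\underline{\delta(x,y,z)}|=2$, the multiset $\delta(x,y,z)$ alone cannot pin down which triplet is displayed: a tree displaying $xz|y$ with $t(\mathrm{lca}(x,z))=n(\delta)$ and $t(\mathrm{lca}(x,y))=t(\mathrm{lca}(y,z))=m(\delta)$ yields exactly the same multiset. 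So (T1) applied to $\mathcal T$ does not force the cherry to be $\{x,y\}$. Drop the third paragraph and keep the restriction-plus-collapsing argument.
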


\begin{proof}[Proof]
	Put $\delta=\delta_{\mathcal T}$ and $\mathcal T=(T,t)$.
	It suffices to note that $\mathcal T_Y$ is obtained from $\mathcal T$ 
	by first taking the subtree $T'$ of $T$ induced by $Y$, and then 
	collapsing edges of $T'$ both of whose end vertices 
	have the same label under the 
	restriction $t'$ of $t$ to $V(T')$. Clearly,
	$\mathcal T_Y$  is a discriminating representation of $\delta|_Y$. By assumption, it follows that $\mathcal T_Y$
	is the unique discriminating 
	 representation of $\delta|_Y$.
	
	It is well-known \cite[Theorem 6.4.1]{SS03} that the set 
	$\mathcal R$ of	triplets displayed by $T'$ is contained in the set
	of triplets displayed by $T$. Since 
	the process of collapsing edges of $T'$ 
	removes triplets from $\mathcal R$, 
	but does not add any, it follows that 
	a triplet displayed by $T_Y$ is also displayed by $T$.
\end{proof}

We now present the main result of this section.

\begin{theorem}\label{thtrip}
	Suppose that $|X|\geq 4$ and that 
	$\mathcal T$ is a labelled tree on $X$
	that is not a fixed-cherry tree.
	Then, for all $x,y,z \in X$ distinct, $\mathcal T$ 
	displays the triplet $xy|z$ if and only 
	if one of the following two properties holds:
	\begin{enumerate}
		\item[(P1)] There exists some $u \in X$ such that 
		$\delta_{\mathcal T}(x,u,z)=\delta_{\mathcal T}(y,u,z) \neq \delta_{\mathcal T}(x,y,u)$ and if $|\underline{\delta_{\mathcal T}(x,y,u)}|=1$ then 
		$ \delta_{\mathcal T}(x,y,u) \neq 
		\delta_{\mathcal T}(x,y,z)$.
		\item[(P2)] There exists some $u \in X$ such that $|\{\delta_{\mathcal T}(x,u,z),\delta_{\mathcal T}(y,u,z),\delta_{\mathcal T}(x,y,u)\}|=3$ and $m(\delta_{\mathcal T}(x,u,z))=m(\delta_{\mathcal T}(y,u,z)) \neq m(\delta_{\mathcal T}(x,y,u))$.
	\end{enumerate}
\end{theorem}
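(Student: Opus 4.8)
\textbf{Proof proposal for Theorem~\ref{thtrip}.}

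The plan is to prove both implications by a careful case analysis based on the structure of a discriminating representation $\mathcal T=(T,t)$ of $\delta_{\mathcal T}$, using Proposition~\ref{prop:prtrip} and Proposition~\ref{rep4} (together with Table~\ref{tsub4}) as the main tools. First I would fix distinct $x,y,z\in X$ and consider the subtree $\mathcal T_{\{x,y,z\}}$; by (T1)--(T2) the value $\delta_{\mathcal T}(x,y,z)$ already tells us whether $T$ displays a triplet on $\{x,y,z\}$ and, if so, which one, \emph{except} in the degenerate situation where $\delta_{\mathcal T}(x,y,z)$ is a singleton multiset. So the real content is to detect the triplet $xy|z$ when $|\underline{\delta_{\mathcal T}(x,y,z)}|=1$, i.e.\ when $\mathrm{lca}_T(x,y)$, $\mathrm{lca}_T(x,z)$, $\mathrm{lca}_T(y,z)$ all carry the same label even though $T$ displays $xy|z$. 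The role of the fourth leaf $u$ in (P1) and (P2) is precisely to ``break'' this coincidence; the hypothesis that $\mathcal T$ is not a fixed-cherry tree is what guarantees such a $u$ exists.

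For the forward direction, assume $\mathcal T$ displays $xy|z$. Write $v=\mathrm{lca}_T(x,y)$ and $w=\mathrm{lca}_T(x,z)=\mathrm{lca}_T(y,z)$, so $v$ lies strictly below $w$. I would look for a leaf $u\in X-\{x,y,z\}$ whose attachment point relative to $v$ and $w$ certifies the triplet, and show that if no suitable $u$ existed then every leaf of $X$ would force $\mathcal T$ to be a fixed-cherry tree with cherry $\{x,y\}$, contradicting the hypothesis. Concretely: if there is a $u$ with $\mathrm{lca}_T(u,z)=w$ but $u$ not below $v$, then $\delta_{\mathcal T}(x,u,z)=\delta_{\mathcal T}(y,u,z)$ while $\delta_{\mathcal T}(x,y,u)$ has $\mathrm{lca}(x,y)=v$ as its ``deep'' vertex; depending on whether $t(v)=t(w)$ or not, one gets case (P1) (with the extra singleton caveat handled by comparing against $\delta_{\mathcal T}(x,y,z)$) or, if $u$ attaches at an intermediate vertex with a third label, case (P2). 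The bookkeeping of which of the three $\mathrm{lca}$'s among $\{x,y,u\}$, $\{x,u,z\}$, $\{y,u,z\}$ is shallow/deep, and which labels coincide, is the tedious part but is entirely forced by the tree.

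For the converse, assume (P1) or (P2) holds for some $u$ and deduce that $\mathcal T$ displays $xy|z$. Here the cleanest route is to pass to $Y=\{x,y,z,u\}$, observe that $\delta_{\mathcal T}|_Y$ is a three-way symbolic ultrametric, and use Proposition~\ref{rep4} to identify which of the types $\hat\delta_1,\dots,\hat\delta_7$ it is; the numerical/combinatorial conditions in (P1), (P2) single out exactly the types (essentially $\hat\delta_4,\hat\delta_5,\hat\delta_7$ and their relabellings, but \emph{not} $\hat\delta_3$) whose unique representation $\mathcal T_Y$ displays $xy|z$. Since $Y$ has size four and $\delta_{\mathcal T}|_Y$ is not of type $\hat\delta_3$, the representation $\mathcal T_Y$ is unique by Proposition~\ref{rep4}, so Lemma~\ref{lmsubset} applies and $xy|z$, being displayed by $\mathcal T_Y$, is displayed by $\mathcal T$. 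The main obstacle I anticipate is not the converse but the forward direction: showing that the absence of a witnessing $u$ really does collapse $\mathcal T$ to a fixed-cherry tree requires checking that \emph{no} leaf of $X$ can sit anywhere in $T$ other than below $v$ or as a child of $\rho_T$'s other ``side'', and handling the singleton caveat in (P1) so that a leaf $u$ sitting below $v$ (which would give $\delta_{\mathcal T}(x,y,u)=\delta_{\mathcal T}(x,y,z)$, a spurious match) is correctly excluded.
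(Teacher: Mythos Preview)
Your proposal is essentially correct and follows the same architecture as the paper's proof: the converse direction is identical (restrict to $Y=\{x,y,z,u\}$, argue $\delta_{\mathcal T}|_Y$ is not of type $\hat\delta_3$, invoke Proposition~\ref{rep4} for uniqueness, then Lemma~\ref{lmsubset} to lift the triplet back to $\mathcal T$), and the forward direction rests on the same structural dichotomy between fixed-cherry trees and everything else.

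The one place where the paper is organized differently is the forward direction. Rather than your contrapositive (``no witnessing $u$ forces $\mathcal T$ to be fixed-cherry with cherry $\{x,y\}$''), the paper does a direct case split into four overlapping structural cases (roughly: $\mathrm{lca}(x,y)$ is not a child of $\mathrm{lca}(x,z)$; $\mathrm{lca}(x,z)$ is not the root or has outdegree $\ge 3$; $\mathrm{lca}(x,y)$ has a child other than $x,y$; some vertex on the path from $\mathrm{lca}(x,z)$ to $z$ carries a label different from $t(\mathrm{lca}(x,y))$), and in each case exhibits an explicit $u$ satisfying (P1) or (P2). Your contrapositive and the paper's case list are logically equivalent, but be aware that your sketch only explicitly treats the situation ``$\mathrm{lca}_T(u,z)=w$ with $u$ not below $v$''; the paper also needs (and handles) $u$ attaching \emph{above} $w$, \emph{below} $v$, and on the $z$-side of $w$, and these are where (P2) and the singleton caveat in (P1) actually arise.

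One point in your converse that deserves more care than you indicate: it is not enough to show $\delta_{\mathcal T}|_Y$ is not of type $\hat\delta_3$; you must also check that the \emph{specific} triplet displayed by $\mathcal T_Y$ is $xy|z$ rather than $xz|y$ or $yz|x$. The paper does this by a short contradiction argument (assuming $T'$ displays $xz|y$ or $yz|x$, or no triplet on $\{x,y,z\}$, and deriving a clash with (P1) or (P2)); your sentence ``the numerical/combinatorial conditions in (P1), (P2) single out exactly the types \ldots\ whose unique representation $\mathcal T_Y$ displays $xy|z$'' is the right claim but hides a genuine (if short) verification.
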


\begin{proof}[Proof]
	Put $\mathcal T=(T,t)$ and $\delta=\delta_{\mathcal T}$.
	Assume first that $x,y,z \in X$ distinct
	are such that $T$ displays 
	the triplet $xy|z$. Put $v=\mathrm{lca}(x,z)$ 
	and $w=\mathrm{lca}(x,y)$. We proceed using a case-analysis on the 
	structure of $T$. Since $\mathcal T$ is not a fixed-cherry tree we need to consider the following (not necessarily disjoint) cases:
	(a): $w$ is not a child of $v$, (b): $v$ is not the root of $T$ or has outdegree three or 
	more, (c): $w$ has a child that is neither $x$ nor $y$, and (d): 
	there exists a vertex $v_0$ on the path from $v$ to $z$ with $t(v_0) \neq t(w)$.
	
	\begin{figure}[h]
		\begin{center}
			\includegraphics[scale=0.7]{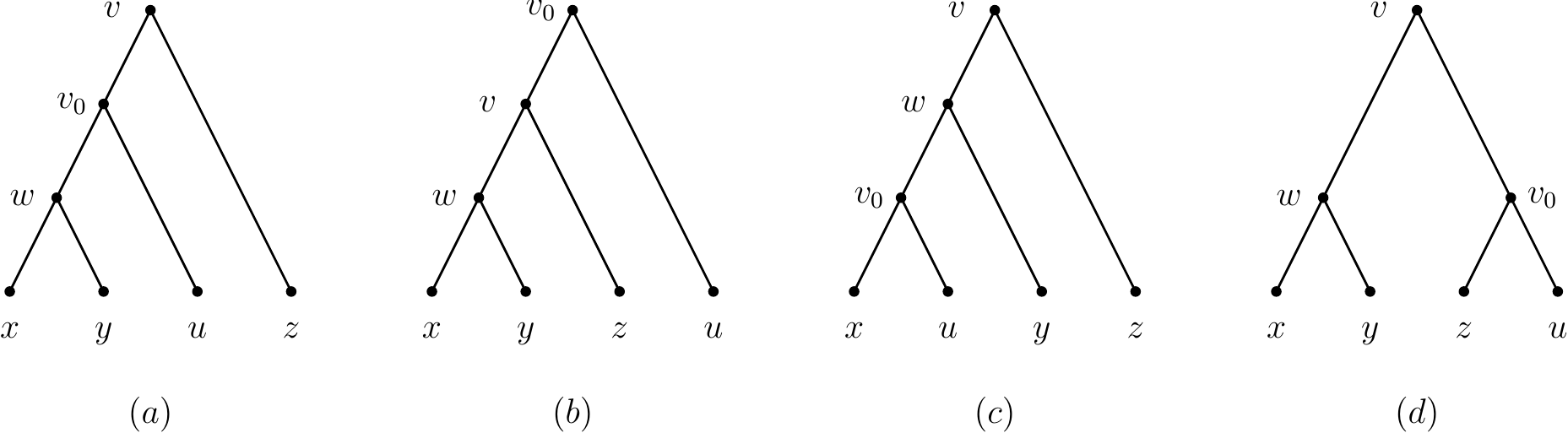}
			\caption{Cases (a)-(d) for the case-analysis carried out 
				in the proof of Theorem~\ref{thtrip}. See text for details.}
			\label{prtrip}
		\end{center}
	\end{figure}
	
	Case (a): Consider the parent $v_0$ of $w$, and an element $u$ in $X$ 
	that is below $v_0$ but not below $w$ (see
	 Figure~\ref{prtrip}(a)). 
	Since $\mathcal T$ is a discriminating representation 
	for $\delta_{\mathcal T}$,
	 we have $t(v_0) \neq t(w)$. Hence,
	$\delta(x,u,z)=\delta(y,u,z)=\{t(v_0), t(v), t(v)\}$ and $\delta(x,y,u)=\{t(w),t(v_0), t(v_0)\}$.
	Consequently, $\delta(x,u,z)=\delta(y,u,z)\not=
	\delta(x,y,u)$. Note that if $t(v)=t(w)$, then 
	$\delta(x,y,z)=\{t(w),t(v),t(v)\}$ and, so, 
$|\underline{\delta(x,y,z)}|=1$. But then 
$\delta(x,y,u)\neq \delta(x,y,z)$
as $|\underline{\delta(x,y,u)}|=2$. Hence, the second 
condition in 
Property~(P1) holds, too. So assume that $t(w)\not=t(v)$.
Then $|\underline{\delta(x,y,u)}|=2$ and so the second 
condition in Property~(P1) does not apply.
	%
	
	Case (b): Consider an element of $u\in X$ such that $v_0:=\mathrm{lca}(u,z)=lca(u,x)$ 
	(see Figure~\ref{prtrip}(b)). If $w$ is not a child of $v$ then
	Property~(P1) follows by Case~(a). So assume that $w$ 
	is a child of $v$. Then $t(v)\neq t(w)$ as $\mathcal T$ is a 
	discriminating representation for $\delta_{\mathcal T}$.
	Since $\delta(x,u,z)=\delta(y,u,z)=\{t(v), t(v_0), t(v_0)\}$ 
	and $\delta(x,y,u)=\{t(w),t(v_0), t(v_0)\}$ we have $\delta(x,u,z)=\delta(y,u,z)\neq \delta(x,y,u)$. Since the choice of $v_0$ implies that 
	$|\underline{\delta(x,y,u)}|\neq 1$ the
	second condition in Property~(P1) does not apply. Hence,
	Property~(P1) is also satisfied in this case.
	
	Case (c): Then there is some $u \in X$ below $w$ that is neither $x$ nor $y$. We 
	may assume without loss of generality that $w=\mathrm{lca}(y,u)$. Put
	$v_0=\mathrm{lca}(x,u)$ (see Figure~\ref{prtrip}(c)). 
	Note that $v_0=w$ 
	may hold. Clearly, $\delta(x,u,z)=\{t(v_0),t(v),t(v)\}$, $\delta(y,u,z)=\{t(w),t(v),t(v)\}$ 
	and $\delta(x,y,u)=\{t(v_0),t(w),t(w)\}$. If $v_0\neq w$ 
	then $\delta(y,u,z)\neq \delta(x,u,z)\not=\delta(x,u,y)$.
	Hence, $|\{\delta(y,u,z), \delta(x,u,z),\delta(x,u,y) \}|=3$.
	Since $m(\delta(x,u,z))=t(v)=m(\delta(y,u,z))$
	and $m(\delta(x,y,u))=t(w)$, Property~(P2) follows.
	So assume that $v_0\not=w$. Then 
	$\delta(y,u,z)=\delta(x,u,z)=\{t(w),t(v),t(v)\}$
	and $\delta(y,u,x)=\{t(w),t(w),t(w)\}$. In view of
	Property~(P1) holding if Case (a) applies, we may assume
	without loss of generality that $w$  is a child of $v$. 
	Since $\mathcal T$ is a discriminating representation
	of $\delta_{\mathcal T}$ we have 
	$t(v)\not=t(w)$. Hence,
	$\delta(y,u,z)=\delta(x,u,z)\neq \delta(x,y,u)$.
	Since $|\underline{\delta(x,y,z)}|=1$ and $|\underline{\delta(x,y,z)}|\neq 1$,
	Property~(P1) follows in this case, too.
	
	Case (d):  Let $u \in X$ such that $v_0=\mathrm{lca}(z,u)$ 
	(see Figure~\ref{prtrip}(d)). Then  $\delta(x,u,z)=\delta(y,u,z)=\{t(v_0),t(v),t(v)\}$ 
	and $\delta(x,y,u)=\{t(w),t(v),t(v)\}$. If 
	$t(w)= t(v_0)$ we have 
	$\delta(x,u,z)=\delta(y,u,z)=\delta(x,y,u)$. In view of 
	Property~(P1) holding if Case~(a) applies, we may 
	assume without loss of generality that $w$ is a child of $v$. Hence, $t(w)\not= t(v)$ because $\mathcal T$ 
	is a discriminating representation for $\delta$.
	But then $|\underline{\delta(x,y,u)}|\neq 1$ and, so, 
	the second condition in Property~(P1) does not apply.
	%

	Conversely, let $x,y,z\in X$ distinct. Assume first 
	that there exists some $u\in X-\{x,y,z\}$ such that 
	Property~(P1) is satisfied for the namesakes of $u$, $x$,
	$y$, and $z$. 
	Consider the restriction $\delta'$ of $\delta$ to $\{x,y,u,z\}$. Let $\mathcal T'=(T',t')$ denote 
	a discriminating representation of $\delta'$.
	To see that $xy|z$ is displayed by $T$ we
	claim first that $\mathcal T'$ is the unique 
	discriminating representation of $\delta'$.
	To see the claim, we show that $xy|z$ is displayed by $T'$.
	Assume for contradiction that the triplet $xy|z$ is not displayed by $T'$.
	In view of the first condition in Property~(P1),
	the outdegree of the root $\rho_{T'}$ cannot be four.
	Hence, one of the triplets $x|yz$ and $y|xz$ must be
	displayed by $T'$ and $T'$ is either resolved or unresolved.
	Assume first that $T'$ is resolved. Then a straight
	forward case analysis concerned with adding $u$ to
	the triplet $x|yz$ implies that that triplet cannot be 
	displayed by $T'$. Swapping the roles of $x$ and 
	$y$ in that argument also implies that 
	the triplet $y|xz$ cannot be displayed by $T'$ either. 
	Thus, $T'$ must be unresolved and, so, either $\rho_{T'}$
	has outdegree three or one of the children of $\rho_{T'}$
	has outdegree three. 
	
	If $T'$ displays the triplet
	$x|yz$ and the outdegree of $\rho_{T'}$ is three
	then $|\underline{\delta(y,u,z)}|=1$. Hence, 
	$\delta(x,y,u)=\delta(x,y,z)$ in view of 
	the second condition in Property~(P1)
	which is impossible. Thus, one of the children of $\rho_{T'}$
	has outdegree three. But this is impossible in view of
	the first condition in Property~(P1). Similar arguments
	imply that the triplet displayed by $T'$ cannot be
	$y|xz$ either which is impossible. Thus, $T'$ must 
	display the triplet $xy|z$.	
	Consequently, either $\rho_{T'}$ is the parent of $u$ and
	$z$ or $x$, $y$, and $u$ have the same parent. In either
	case it follows that $\delta'$ cannot be 
	of type $\hat{\delta_3}$. Thus, $\mathcal T'$
is the unique discriminating representation of $\delta'$,
as claimed. By Lemma~\ref{lmsubset},
it follows that $xy|z$ must be displayed by $\mathcal T$.

	 %
%

Assume next 
that there exists some $u\in X-\{x,y,z\}$ such that 
Property~(P2) is satisfied for the namesakes of $u$, $x$,
$y$, and $z$.  Consider again the restriction $\delta'$ of $\delta$ to $\{x,y,u,z\}$. Then $\delta'$ must have a 
 representation $\mathcal T'=(T',t')$. In view of the first condition of Property~(P2), $\mathcal T'$ must be
 discriminating. A straight forward case
 analysis implies that $\delta'$ cannot be of type $\hat{\delta_3}$. Thus, $\mathcal T'$  
 is the unique discriminating representation of $\delta'$. 
 
 In view of
	Table~\ref{tsub4}, there must exist at least two subsets $Y$ 
	and $Y'$ of $\{x,y,z,u\}$ of size three satisfying $\delta(Y)=\delta(Y')$. 
	Since $|\{\delta(x,u,z),\delta(y,u,z),\delta(x,y,u)\}|=3$, it follows that $\{x,y,z\}$ must be 
	one of these subsets.
	If $\delta(x,y,z)=\delta(x,y,u)$, then 
	$D_{\mathcal T}(x,u)=m(\delta(x,u,z))$ 
	and $D_{\mathcal T}(y,u)=m(\delta(y,u,z))$ must hold
	where $D_{\mathcal T}$ is the symbolic ultrametric 
	represented by $\mathcal T$. Indeed, 
	since $\delta(x,y,u)=\delta(x,y,z)$, one of the 
	following two cases must hold:
	($\alpha$) $D_{\mathcal T}(x,z)=D_T(x,u)$ and $D_{\mathcal T}(y,z)=D_{\mathcal T}(y,u)$ and 
	($\beta$) $D_{\mathcal T}(x,z)=D_T(y,u)$ and $D_T(y,z)=D_{\mathcal T}(x,u)$.
However	Case~($\beta$) implies
	$\delta(x,z,u)=\delta(y,z,u)$, 
	which is 
	impossible in view of the
	assumption that $\{\delta(x,y,u),\delta(y,z,u),\delta(x,z,u)\}$ has size three. Thus, Case ($\alpha$) must
	hold. But then $D_{\mathcal T}(x,u)=m(\delta(x,u,z))$ and $D_{\mathcal T}(y,u)=m(\delta(y,u,z))$, as required.
	
	Since, by assumption, we also have 
	$m(\delta(x,u,z))=m(\delta(y,u,z))$ we obtain $D_{\mathcal T}(x,u)=D_{\mathcal T}(y,u)$. 
	Since $D_{\mathcal T}(x,u)$ and $D_{\mathcal T}(y,u)$ are both elements in the
	multiset  $\delta(x,y,u)$, 
	we obtain $m(\delta(x,u,z))=D_{\mathcal T}(x,u)=m(\delta(x,y,u))$, which is 
	impossible in view of (P2). Thus, we either have $\delta(x,y,z)=\delta(x,u,z)$ 
	or $\delta(x,y,z)=\delta(y,u,z)$. Note that the roles of $x$ and $y$ are 
	interchangeable here, so we may assume without loss of generality that $\delta(x,y,z)=\delta(y,u,z)$.
	
	Using similar arguments as before, we have $D_{\mathcal T}(x,z)=m(\delta(x,u,z))$, $D_{\mathcal T}(x,y)=m(\delta(x,u,y))$, 
	and $D_{\mathcal T}(y,z)=m(\delta(y,u,z))$ in this case. By 
Property~(P2), it follows that $D_{\mathcal T}(x,z)=D_{\mathcal T}(y,z)\neq D_{\mathcal T}(x,y)$. Thus,  $xy|z$ is displayed by $T'$ and, by Lemma~\ref{lmsubset}, $xy|z$ is also displayed 
	by $T$.
\end{proof}

We now explain how, as a 
direct consequence of Lemma~\ref{oct} and Theorem~\ref{thtrip}, it is possible to decide
whether or not a three-way symbolic map $\delta$ on a set $X$ with $|X|\ge 5$ is a
three-way symbolic ultrametric by considering triplets and, if so, construct 
the labelled tree $\mathcal T$  which represents  $\delta$.

First, check if $\delta$ is a fixed-cherry map.  
If this is the case, then $\delta$ is a three-way symbolic ultrametric and $\mathcal T$ can be easily constructed. 
If not, then
compute the set $\mathrm{Tr(\delta)}$ of triplets of $X$ 
satisfying Properties (P1) or (P2), and use it as input to the 
BUILD algorithm. If
there is no tree that display all the triplets in $\mathrm{Tr(\delta)}$, then $\delta$ is not 
a three-way symbolic ultrametric. Otherwise, using 
the tree $T$ that is constructed from the BUILD algorithm and the map $\delta$, 
it is straight-forward to decide if there is a labelling map $t$ for $T$ such that 
$(T,t)$ represents $\delta$.  If this is the case, then $\delta$ is a three-way symbolic ultrametric
which has the computed labelled tree $(T,t)$ as a representation, otherwise it is not.

Note that BUILD may return a tree $T$ 
from $\mathrm{Tr}(\delta)$ even if 
the map $\delta$ is not a three-way symbolic ultrametric. 
For example, let $M=\{A,B\}$
and consider the map $\delta : {X \choose 3} \to \mathcal M$
where $X=\{1,2,3,4,5\}$, and $\delta(x,y,z)=3A$ if $\{x,y,z\}=\{3,4,5\}$, and $\delta(x,y,z)=2A+B$ 
otherwise. Although the map is clearly not representable by a labelled tree, we 
have $\mathrm{Tr}(\delta)=\{34|1,34|2,35|1,35|2,45|1,45|2\}$, and 
it is easy to check that there exists a phylogenetic 
tree on $X$ whose set of displayed triplets is
$Tr(\delta)$.

\section{Conclusion}\label{sec:discuss}

We conclude by presenting some possible future directions:

\begin{itemize}
	\item In \cite{chepio} some relationships are derived between three-way and two-way 
	dissimilarities in general. It would be interesting to see which of these 
	relationships might be extendable to symbolic three-way maps. 
	\item We define three-way tree-maps in terms of medians in leaf-labelled 
	trees. Can any of our results be extended
	to median networks \cite{median}? Also, can our results concerning three-way symbolic ultrametrics 
	be extended to rooted phylogenetic networks? (cf. e.g. \cite{HS})
	\item We can clearly consider generalizations of three-way symbolic maps 
	to $k$-way symbolic maps, $k\ge 2$ (see e.g \cite{chepio,D00,W10}), 
	and therefore generalize the 
	notion of a three-way symbolic ultrametric in the natural way. 
	If  $\delta$ is a $k$-way symbolic map and its restriction to every $k+2$ subset is a
	$k$-way symbolic ultrametric, then is $\delta$ a symbolic ultrametric?
	\item In \cite{Stadler}, an application of symbolic ultrametrics to 
	constructing genome-based phylogenies is presented. 
	It would be interesting to see if this application 
	could be extended to three-way maps.
	Note that results presented in \cite{L06} might be relevant in this context. 
	Also, it would be interesting to develop associated algorithms such as those in \cite{LM15}, for
	three-way symbolic maps.
\end{itemize}

\vspace{0.5cm}

\noindent{\bf Acknowledgement:} The authors thank The Biomathematics Research Centre, University
of Canterbury, New Zealand, where some of this work was undertaken. The authors would like to thank  
Vladimir Gurvich and Stefan Gr\"unewald for making us aware of their work, and also the anonymous 
reviewers for their helpful feedback.  \\

\end{document}